\definecolor{ggplot_orange}{HTML}{F8766D}
\definecolor{ggplot_green}{HTML}{7CAE00}
\definecolor{ggplot_blue}{HTML}{00BFC4}
\newcommand{\E}{\mathop{\operatorname{E}\/}}
\newcommand{\Var}{\mathop{\operatorname{Var}\/}}
\newcommand{\Cov}{\mathop{\operatorname{Cov}\/}}
\newcommand \R{\mathbb R}
\newcommand \N{\mathbb N}
\newcommand \Z{\mathbb Z}
\newcommand{\ignore}&{}
\newcommand{\transp}{\textsc{t}}
\newcommand*{\defeq}{\mathrel{\rlap{%
                     \raisebox{0.3ex}{$\m@th\cdot$}}%
                     \raisebox{-0.3ex}{$\m@th\cdot$}}%
                     =}
\newcommand*{\eqdef}{=\mathrel{\rlap{%
                     \raisebox{0.3ex}{$\m@th\cdot$}}%
                     \raisebox{-0.3ex}{$\m@th\cdot$}}%
                     }
\newcommand{\pr}[1]{\left(#1\right)}
\theoremstyle{plain}
\newtheorem{theorem}{Theorem}[section]
\newtheorem{proposition}[theorem]{Proposition}
\newtheorem{lemma}[theorem]{Lemma}
\theoremstyle{definition}
\newtheorem{definition}[theorem]{Definition}
\newtheorem{example}[theorem]{Example}
\theoremstyle{remark}
\newtheorem{remark}[theorem]{Remark}
\small\color{black},
\ttfamily\color{gray},
\ttfamily\color{gray}\footnotesize,
\begin{document}

\title{\bfseries  Depth Patterns and their Applications in Animal Tracking}

\author{%
    \textsc{Annika Betken}%
    \thanks{ University of Twente, Faculty of Electrical Engineering, Mathematics and Computer Science (EEMCS),  Drienerlolaan 5, 7522 NB Enschede, Netherlands, 
              \texttt{ a.betken@utwente.nl}
              . } \ and
                  \textsc{Alexander Schnurr}%
    \thanks{University of Siegen, Department of Mathematics,Walter-Flex-Str. 3,
               D-57072 Siegen, Germany,
              \texttt{Schnurr@mathematik.uni-siegen.de}
              . }
    }

\date{\today}

\maketitle

\begin{abstract}
We establish a definition of ordinal patterns for multivariate data sets based on the concept of Tukey's halfspace depth.
Given the definition of these \emph{depth patterns}, we are interested in the probabilities of observing specific patterns in time series. For this, we consider the relative frequency of depth patterns as natural estimators for their occurrence probabilities. Depending on the choice of reference distribution and the relation between reference and data distribution, we distinguish different settings that are considered separately. Within these settings we study statistical properties of depth pattern probabilities, establishing consistency and asymptotic normality  under the assumption of weakly dependent time series.
Since our concept only depends on ordinal depth information, the resulting values are robust under small perturbations and measurement errors.  We emphasize the applicability of our method by analyzing the depth patterns which are found in seal pubs' movement. We use our approach in order to choose an appropriate model out of a range of two-dimensional random walks, which are commonly used in mathematical biology. 
\end{abstract}


\noindent%
{\it Keywords:}  time series analysis; ordinal pattern; Tukey depth; limit theorem
{\it MSC2020:}  62M10; 62H10; 60F05; 62P10
\vfill

\newpage

\section{Introduction}

Ordinal patterns encode the spatial order of temporally-ordered data points.
More precisely, by the ordinal pattern of order $p$ of data points $x_1, \ldots, x_p$ we refer to the permutation $(\pi_1,\ldots, \pi_p)$, where $\pi_j$ denotes the rank of $x_j$ within the values  $x_1, \ldots x_p$. For simplicity we assume that the values of the data points are all different. 

\begin{figure}[!ht]
\centering
\begin{tikzpicture}[x=1pt,y=1pt]
\path[use as bounding box,fill opacity=0.00] (0,0) rectangle (361.35, 72.27);
\begin{scope}
\path[draw=ggplot_orange,line width= 0.6pt] ( 10.68, 10.19) --
	( 34.93, 29.58) --
	( 59.17, 48.97);
\path[draw=ggplot_orange,line width= 0.4pt,line join=round,line cap=round,fill=ggplot_orange] ( 10.68, 10.19) circle (  2.50);
\path[draw=ggplot_orange,line width= 0.4pt,line join=round,line cap=round,fill=ggplot_orange] ( 34.93, 29.58) circle (  2.50);
\path[draw=ggplot_orange,line width= 0.4pt,line join=round,line cap=round,fill=ggplot_orange] ( 59.17, 48.97) circle (  2.50);
\end{scope}

\begin{scope}
\path[draw=ggplot_orange,line width= 0.6pt,line join=round] ( 69.53, 10.19) --
	( 93.77, 48.97) --
	(118.02, 29.58);
\path[draw=ggplot_orange,line width= 0.4pt,line join=round,line cap=round,fill=ggplot_orange] ( 69.53, 10.19) circle (  2.50);
\path[draw=ggplot_orange,line width= 0.4pt,line join=round,line cap=round,fill=ggplot_orange] ( 93.77, 48.97) circle (  2.50);
\path[draw=ggplot_orange,line width= 0.4pt,line join=round,line cap=round,fill=ggplot_orange] (118.02, 29.58) circle (  2.50);
\end{scope}

\begin{scope}
\path[draw=ggplot_orange,line width= 0.6pt,line join=round] (128.37, 48.97) --
	(152.62, 10.19) --
	(176.87, 29.58);
\path[draw=ggplot_orange,line width= 0.4pt,line join=round,line cap=round,fill=ggplot_orange] (128.37, 48.97) circle (  2.50);
\path[draw=ggplot_orange,line width= 0.4pt,line join=round,line cap=round,fill=ggplot_orange] (152.62, 10.19) circle (  2.50);
\path[draw=ggplot_orange,line width= 0.4pt,line join=round,line cap=round,fill=ggplot_orange] (176.87, 29.58) circle (  2.50);
\end{scope}

\begin{scope}
\path[draw=ggplot_orange,line width= 0.6pt,line join=round] (187.22, 48.97) --
	(211.47, 29.58) --
	(235.72, 10.19);
\path[draw=ggplot_orange,line width= 0.4pt,line join=round,line cap=round,fill=ggplot_orange] (187.22, 48.97) circle (  2.50);
\path[draw=ggplot_orange,line width= 0.4pt,line join=round,line cap=round,fill=ggplot_orange] (211.47, 29.58) circle (  2.50);
\path[draw=ggplot_orange,line width= 0.4pt,line join=round,line cap=round,fill=ggplot_orange] (235.72, 10.19) circle (  2.50);
\end{scope}

\begin{scope}
\path[draw=ggplot_orange,line width= 0.6pt,line join=round] (246.07, 29.58) --
	(270.32, 48.97) --
	(294.57, 10.19);
\path[draw=ggplot_orange,line width= 0.4pt,line join=round,line cap=round,fill=ggplot_orange] (246.07, 29.58) circle (  2.50);
\path[draw=ggplot_orange,line width= 0.4pt,line join=round,line cap=round,fill=ggplot_orange] (270.32, 48.97) circle (  2.50);
\path[draw=ggplot_orange,line width= 0.4pt,line join=round,line cap=round,fill=ggplot_orange] (294.57, 10.19) circle (  2.50);
\end{scope}

\begin{scope}
\path[draw=ggplot_orange,line width= 0.6pt,line join=round] (304.92, 29.58) --
	(329.17, 10.19) --
	(353.42, 48.97);
\path[draw=ggplot_orange,line width= 0.4pt,line join=round,line cap=round,fill=ggplot_orange] (304.92, 29.58) circle (  2.50);
\path[draw=ggplot_orange,line width= 0.4pt,line join=round,line cap=round,fill=ggplot_orange] (329.17, 10.19) circle (  2.50);
\path[draw=ggplot_orange,line width= 0.4pt,line join=round,line cap=round,fill=ggplot_orange] (353.42, 48.97) circle (  2.50);
\end{scope}

\definecolor{text}{gray}{0.10}
\begin{scope}
\node[text=text,anchor=base,inner sep=0pt, outer sep=0pt, scale=  0.80] at ( 34.93, -15.0) {(1, 2, 3)};
\end{scope}

\begin{scope}
\node[text=text,anchor=base,inner sep=0pt, outer sep=0pt, scale=  0.80] at ( 93.77, -15.0) {(1, 3, 2)};
\end{scope}

\begin{scope}
\node[text=text,anchor=base,inner sep=0pt, outer sep=0pt, scale=  0.80] at (152.62, -15.0) {(3, 1, 2)};
\end{scope}

\begin{scope}
\node[text=text,anchor=base,inner sep=0pt, outer sep=0pt, scale=  0.80] at (211.47, -15.0) {(3, 2, 1)};
\end{scope}

\begin{scope}
\node[text=text,anchor=base,inner sep=0pt, outer sep=0pt, scale=  0.80] at (270.32, -15.0) {(2, 3, 1)};
\end{scope}

\begin{scope}
\node[text=text,anchor=base,inner sep=0pt, outer sep=0pt, scale=  0.80] at (329.17, -15.0) {(2, 1, 3)};
\end{scope}
\end{tikzpicture}
\vspace{5mm}
\caption{The six univariate ordinal patterns of order 3 (not allowing for ties).}
\end{figure}

Since a definition of ordinal patterns presupposes a total ordering 
of the data, there is no straightforward extension of the notion of ordinal patterns from univariate to multivariate observations. 
Nevertheless,  applications  often  require an analysis of multivariate data sets:  physiological data sets such as ECG or EEG data are usually determined  from multiple electrodes. In portfolio optimization,  assets are supervised and modeled simultaneously. In ecology, the movement of animals on the ground is described by two coordinates. 

Different approaches have been suggested  to deal with multidimensional data via ordinal patterns; see Section \ref{sec:math} 
for an 
overview. Most of these treat the components of data vectors  separately. 
 Unlike other articles, which either only consider dependencies within each component (\cite{keller2003symbolic}, \cite{mohr2020new}) or only cross-dependencies between  components (\cite{schnurr:dehling:2017}, \cite{betken2021multdep}), we present a different approach  which incorporates both by taking into account the overall dynamics within the   multivariate reference system.

As motivation, imagine   a certain country having a high probability for earthquakes.  The location of each earthquake is determined according to the distribution $Q$. Either assume that the distribution $Q$ is known or that at least the time series of earthquake locations is  stationary. In the latter case, we get to know $Q$ better with each realization. 
Let $X_1, \dots, X_n$ denote the geographic coordinates of an animal at time points $j=1, \ldots, n$, and assume that the animal's movement is independent of $Q$.
In a first step, questions of interest in this context could be the following:
How close is the animal to the (potential) center of an earthquake region? Is it more likely to move  towards or from the center? In a second step one might consider models that allow the movement of animals to
depend on the 
distribution of seismic activity. 

In any case, these questions can be answered by defining ordinal patterns of  geographic coordinates
on the basis of `how deep' the coordinates lie in the earthquake region. 
As a result, a lack of canonical ordering of  $\mathbb{R}^2$ 
can, in this case,  be overcome by the concept of statistical depth.
The basic idea of statistical depth
 is to measure how deep a specific element in a multidimensional space lies in a given, multivariate  reference distribution.
 This leads naturally to a 
center-outward ordering of sample points in multivariate data. 
Starting with Tukey's proposal of  {\em halfspace depth} in 1975,  a number of different depth functions have been proposed.
With respect to the questions raised against the background of the considered motivational example,
however, Tukey's original concept of statistical depth is the most suitable choice. 
Therefore, 
our aim is to estimate the probability distribution of ordinal patterns defined with respect to Tukey's halfspace depth. This knowledge can be used, for example, in the context of model selection.  


The paper is structured as follows: In the subsequent section we fix  notations and provide  mathematical definitions. Furthermore, we give a short survey on the existing literature. Section 3 establishes limit theorems in the setting where the reference function is known. The case of an unknown reference function is considered in Section 4. 
In Section 5 we show the applicability of our methods in a real world data example. A short discussion in Section 6 rounds out the paper. 

\section{Mathematical and historical background} \label{sec:math}

In this section, starting from the definition of ordinal patterns for univariate observations, we establish the concept of depth patterns  as its multivariate analogue.
For this, we  base an ordering of multivariate observations on the concept of statistical depth.
Apposite to the statistical applications motivating our results we choose Tukey's halfspace depth
as basis for our  conception of depth patterns in this article.
The definition of halfspace depth requires a reference distribution with respect to which  depths of observations and accordingly their depth patterns are computed. 
Depending on the choice of reference distribution and the relation between reference and data distribution, 
we distinguish four different cases that are  considered separately in the following sections.

For one-dimensional observations ordinal patterns are defined as follows:
\begin{definition} \label{def:onedimpat}
    For $p\in\mathbb{N}$ let $S_p$ denote the set of permutations of $\{1, \ldots, p\}$, which we write as $p$-tuples containing each of the numbers $1, \ldots, p$ exactly once.
By the \emph{ordinal pattern of order $p$} of observations $x_1, \ldots, x_p$  we refer to the permutation
\begin{align*}
\Pi(x_1,..., x_p)=(r_1,..., r_p)\in S_p
\end{align*}
which satisfies
\[
r_j \leq r_k  \hspace{10mm} \Longleftrightarrow \hspace{10mm}   x_j \leq x_k
\]
for every $j,k\in \{1,2,...,p\}$ with $r_j<r_k$ if $x_j=x_k$ for $j < k$.
\end{definition}



The ordinal pattern consists of the ranks of the values of the original vector. Originally, ordinal patterns have been introduced  
to measure the complexity 
of data sets (and their underlying models) by means of  the so-called permutation entropy (the Shannon entropy of a random variable taking values in $S_p$ where each $\pi \in S_p$ occurs with probability
$p(\pi):=P(\Pi(X_1,\ldots,X_{p})=\pi)$); see  \cite{bandt:pompe:2002}.
Since this seminal paper, ordinal patterns have proved useful for the analysis of different types of data sets such as EEG data (\cite{keller:maksymenko:stolz:2015}), speech signals (\cite{bandt:2005}), and chaotic maps which relate to the theory of dynamical systems (\cite{bandt:pompe:2002}).
Further applications include the approximation of the Kolmogorov-Sinai entropy; see \cite{sinn:2012}. 
More recently, ordinal patterns have been used to detect and to model the dependence between time series; see \cite{schnurr:2014}. Limit theorems for the parameters under consideration have been proved for short-range dependent time series in \cite{schnurr:dehling:2017}.

Due to the lack of  a total ordering of points in $\mathbb{R}^d$, 
an order relative to some reference object is required.
Implicitly such a relative order is provided by the concept of statistical depth, which quantifies 
the deepness of data points
relative to a given, multivariate  reference distribution.
Starting with Tukey's proposal of  {\em halfspace depth} (also called {\em location depth} or {\em Tukey
depth}) in 1975,  a number of different depth functions have been proposed (see
\cite{donoho1992breakdown}).
\begin{definition}[Halfspace Depth] \label{def:tukey}
Let $Q$ be a probability distribution on $\R^d$ and let $\mathcal{S}^{d-1}:=\left\{x\in \mathbb{R}^d:\|x\|=1\right\}$ denote the unit sphere in $\mathbb{R}^d$. The halfspace depth $D_Q(x)$ of a point $x\in \mathbb{R}^d$ with respect to  $Q$  is defined as
\begin{align*}
D_Q(x):=\min\limits_{\phi\in \mathcal{S}^{d-1}}Q\left(\left\{z\in \mathbb{R}^d|(z-x)^T\phi\geq 0\right\}\right).
\end{align*}
\end{definition}

Let us mention that  \cite{liu1990notion} and \cite{zuo2000general}   axiomatically approached the definition of data depth by  establishing four properties
that  any reasonable statistical depth function should have.
Halfspace depth
satisfies these axioms for  all absolutely continuous reference distributions on $\R^d$. Many other appealing properties of halfspace depth are well-known
and well-documented; see, e.g., \cite{donoho1992breakdown}, \cite{mosler2002multivariate}, \cite{koshevoy2002tukey},
\cite{ghosh2005maximum}, \cite{cuesta2008random}, \cite{hassairi2008tukey}.

Generally speaking, the choice of depth function depends on the particular statistical application one is interested in. 
Halfspace depth is a natural choice for  elliptical reference distributions, which correspond to the type of distributions motivating our results.
\begin{definition}
    A $d$-dimensonal random vector $X$ is said to have an elliptical distribution with $\mu\in \mathbb{R}^d$, positive definite 
    symmetric $d\times d$ matrix $\Sigma$, and probability density function $h$, if 
      its density is of the form
\begin{align*}
    f(x)=c|\Sigma^{-\frac{1}{2}}|h\left((x-\mu)^{\transp}\Sigma^{-1}(x-\mu)\right);
\end{align*}
where $c$ is a positive constant; see \cite{zuo2000structural}.
\end{definition}


Relying on the depth function $D_Q$ with reference distribution $Q$, we define the depth patterns of a point in $\mathbb{R}^d$:
\begin{definition}
For a finite sequence $(x_j)_{j=1,...,p}$ in $\R^d$, we define the 
\emph{depth pattern} $\Pi_Q$
by the vector $\tau =(\tau(1), \ldots, \tau(p))\in \N^p$ satisfying
\begin{align*}
\Pi_Q(x_1, \ldots, x_p)=(\tau(1), \ldots, \tau(p)), 
\end{align*}
where \begin{align*}
\tau(i)\defeq \# \left\{j\in \{1, \ldots,  p \}\left|\right. D_Q(x_j)\geq D_Q(x_i)\right\}.
\end{align*}
\end{definition}
In the above definition,
the integer $\tau(i)$ describes how deep the $i$-th entry of the vector $(x_1, \ldots, x_p)$ lies with respect to the reference measure $Q$. The deeper $x_i$ in $Q$, the smaller $\tau(i)$. Lemma \ref{lem:unique} in Section \ref{sec:first_cases} of this article establishes assumptions guaranteeing that the depths of two points $x_i$ and $x_j$ are (almost surely) pairwise different. In this case, $\Pi_Q(x_1, \ldots, x_p)$ corresponds to a permutation of the indices of $x_1, \ldots, x_p$. 

In Section \ref{sec:second_cases} of the present article we approximate the measure $Q$ by a sequence of discrete measures $Q^{(n)}$. In this setting, ties appear naturally in $\Pi_{Q^{(n)}}(x_1, \ldots, x_p)$. However, we will see  that with increasing sample size these vanish if the limiting distribution $Q$ of $Q^{(n)}$ does not allow for ties to occur.

Given ties, $\Pi_{Q^{(n)}}(x_1, \ldots, x_p)$ takes values in a space isomorphic to the set of Cayley permutations:
Accordingly, each entry of $(\tau(1),...,\tau(p))$ takes values in $\{1,...,p\}$, but with the additional restriction that, 
if $x_{i_1}, \ldots, x_{i_k}$
have the same depth with respect to $Q^{(n)}$, $\tau(i_1)=\ldots =\tau(i_k)$.
Consequently, it holds that, 
if $j,j+1,...j+k-1$ do not appear as an entry, but $j+k$ does, then $j+k$ appears $k+1$ times. 
For example, if $p=4$, the values $(4,4,4,4)$ and $(2,4,4,2)$ are possible, while (2,2,2,2) is not. For this subspace of $\mathbb{N}^p$ we write $T_p$ in analogy to $S_p$ (the permutations of length $p$). 

Given
the definition of depth patterns, we are interested in the probability 
of observing a specific pattern $\pi$ in 
a time series $(X_j)_{j \in \N}$.
For this, we consider the relative frequency
\begin{align}\label{def:estimator}
  \hat{p}_{n, Q}(\pi):=\frac{1}{n-p+1}\sum\limits_{i=0}^{n-p+1}1_{\left\{\Pi_Q(X_{i},X_{i+1}, \ldots, X_{i+p-1})=\pi\right\}}
\end{align}
of the depth pattern $\pi\in T_p$
as  a natural estimator for
\begin{align*}
p_Q(\pi):=P\pr{\Pi_Q(X_1,\ldots, X_{p})=\pi}.
\end{align*}
This estimator is the main object of our studies in Sections \ref{sec:first_cases}
and \ref{sec:second_cases}.

Given stationary time series   $(X_j)_{j \in \N}$ with values in 
$\mathbb{R}^d$, $d>1$, and marginal distribution $P_{X_1}$, 
and a distribution $Q$ with respect to which  statistical depth is defined,
its analysis depends on whether the reference distribution $Q$ is known or approximated by its empirical analogue $Q^{(n)}$ and whether $Q$ and $P_{X_1}$ coincide.
Accordingly, we distinguish the following cases:
\begin{enumerate}
    \item[(A)] $Q=P_{X_1}$ and $Q$ is known. 
    \item[(B)] $Q$ is known, but the relationship to $P_{X_1}$ is not specified. In particular, if $Q=P_Z$ the random variable $Z$ might be independent of the time series $(X_j)_{j \in \N}$. 
        \item[(C)] $Q=P_{X_1}$ is unknown and observed through the time series  $X_1, \ldots, X_n$.
    \item[(D)] $Q$ is unknown. We observe $Q$ through observations $Y_1, \ldots, Y_m$ and determine the depth of $X_1, \ldots, X_n$ with respect to the empirical distribution of $Y_1, \ldots, Y_m$.  In particular, the  two distributions may, but do not have to be, independent.
\end{enumerate}

We close this section by giving a short overview on how other authors have treated multivariate time series (or data sets) using ordinal patterns. 

Given multivariate data sets $(x_j)_{j=1,...,n}$ with $x_j=(x_{j,1}, ..., x_{j,d})$
the following approaches to defining ordinal patterns have been discussed in the literature so far:
\begin{enumerate}
    \item \cite{keller2003symbolic} determine the univariate ordinal patterns of $x_{1, i}, \ldots, x_{n, i}$ for each  $i$ and, subsequently, average  over the dimensions $i=1, \ldots, d$. The interplay between the different dimensions is  neglected in this approach. 
\item   \cite{mohr2020new} determine the univariate ordinal patterns of $x_{1, i}, \ldots, x_{n, i}$ for each  $i$ and, subsequently,  store all $d$ patterns at a fixed time point $i$ in one vector. The multivariate pattern is, hence, a vector of univariate patterns. The number of patterns is $p!\cdot d$.
\item \cite{he2016multivariate} determine the  univariate ordinal patterns (of length $d$) of $x_{t, 1}, \ldots, x_{t, d}$, and, subsequently, average over all $n$ time points. For patterns with ties, this is the spatial approach described in \cite{schnurr2022generalized}.
\item  \cite{rayan2019multidimensional} (amongst others) project the multivariate data into a one-dimensional space first, and, subsequently, determine the ordinal patterns of the projected values in the clasical way.   
\end{enumerate}
Against the background of the approaches described above, the consideration of depth patterns is closest 
to techniques that make use of dimension reduction, i.e., the fourth approach.

\section{Depth patterns based on a known reference distribution} \label{sec:first_cases}

In this section we consider the cases in which the reference distribution $Q$ is known, i.e., cases (A) and (B) mentioned in Section \ref{sec:math}. In these cases, it is possible to reduce proofs to classical limit theorems.
For the depth pattern estimator $\hat{p}_{n, Q}$ defined in \eqref{def:estimator} we show consistency for time series stemming from stationary, ergodic processes 
and asymptotic normality for time series corresponding to  stationary, 1-approximating functionals of
 absolutely regular processes.
Additionally, we provide easy to check, sufficient criteria under which asymptotic normality of the estimator is mathematically guaranteed.

Consistency is implied by Proposition 1 in \cite{betken2021ordinal}
which is a consequence of the Birkhoff-Khinchin ergodic theorem (see Theorem 1.2.1 in \cite{cornfeld2012ergodic}):
\begin{proposition}\label{prop_1:consistent}
Let $Q$ be a probability distribution on $\R^d$.
Suppose that $(X_j)_{j \in \N}$ is a stationary ergodic time series with values in $\R^d$. Then, $\hat{p}_{n, Q}(\pi)$ is a consistent estimator of $p_Q(\pi):=P\pr{\Pi_Q(X_1,\ldots,X_{p})=\pi}$. More precisely,
\begin{align} \label{firstcon}
 \lim_{n\rightarrow \infty }\hat{p}_{n, Q}(\pi)=p_Q(\pi)
\end{align}
almost surely.
\end{proposition}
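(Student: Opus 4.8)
The plan is to recognise $\hat p_{n,Q}(\pi)$ as a Birkhoff average of a bounded functional of the time series and then to invoke the pointwise ergodic theorem. Concretely, let $\psi\colon(\R^d)^p\to T_p$ be the map $\psi(y_1,\dots,y_p):=\Pi_Q(y_1,\dots,y_p)$ and define the new process $Z_i:=1_{\{\psi(X_i,X_{i+1},\dots,X_{i+p-1})=\pi\}}$. Since $Z_i$ depends on only $p$ consecutive coordinates of $(X_j)_{j\in\N}$, it is a measurable factor of the shift dynamical system generated by $(X_j)_{j\in\N}$; hence $(Z_i)_{i}$ is again stationary, and ergodicity is inherited under such block coding. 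The estimator in \eqref{def:estimator} is, up to the asymptotically negligible difference between the normalisations $\tfrac{1}{n-p+1}$ and $\tfrac1n$ and a bounded number of boundary summands, the Ces\`aro average $\tfrac1n\sum_{i=1}^{n}Z_i$.

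The key steps, in order, are: (i) verify that $x\mapsto D_Q(x)$ is Borel measurable --- this is standard, as the halfspace depth is upper semicontinuous, being the infimum over $\phi\in\mathcal S^{d-1}$ of the measurable maps $x\mapsto Q(\{z:(z-x)^{\transp}\phi\ge 0\})$ --- so that the events $\{D_Q(X_j)\ge D_Q(X_i)\}$ lie in the underlying $\sigma$-field and $\psi$ is measurable; (ii) conclude that $(Z_i)_{i\in\N}$ is stationary and ergodic by the factor argument above, which is exactly Proposition 1 in \cite{betken2021ordinal}; (iii) apply the Birkhoff--Khinchin ergodic theorem (Theorem 1.2.1 in \cite{cornfeld2012ergodic}) to the bounded, integrable variable $Z_1$, obtaining $\tfrac1n\sum_{i=1}^{n}Z_i\to\E[Z_1]$ almost surely; (iv) identify $\E[Z_1]=P(\psi(X_1,\dots,X_p)=\pi)=p_Q(\pi)$ by stationarity; (v) absorb the $O(1/n)$ discrepancy stemming from the precise index range and the $\tfrac{1}{n-p+1}$ normalisation, which leaves the limit unchanged. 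Equivalently, one may phrase (ii)--(iv) abstractly: with $T$ the ergodic shift on path space and $f=1_{\{\psi=\pi\}}$ a bounded measurable function of the first $p$ coordinates, $\hat p_{n,Q}(\pi)\approx\tfrac1n\sum_{i=0}^{n-1}f\circ T^i\to\int f\,dP=p_Q(\pi)$ almost surely.

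I do not anticipate a genuine obstacle: the entire content is the observation that depth patterns of sliding windows form a stationary ergodic sequence of indicators, after which the ergodic theorem does the work. The one point requiring care rather than ingenuity is the measurability of $\Pi_Q$, i.e.\ of $D_Q$; the remainder is bookkeeping about the normalisation and the standard fact that factors of ergodic systems are ergodic.
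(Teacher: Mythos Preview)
Your proposal is correct and follows precisely the approach indicated in the paper: the paper does not give a self-contained proof but merely notes that the result is implied by Proposition~1 in \cite{betken2021ordinal}, itself a consequence of the Birkhoff--Khinchin ergodic theorem, which is exactly the argument you spell out. Your additional remarks on the measurability of $D_Q$ and the $O(1/n)$ discrepancy in the normalisation are welcome details that the paper leaves implicit.
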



In all that follows, the interplay between the measures $Q$ and $P_{X_1}$ is  important. 
Generally speaking,  for our analysis we need to avoid ties in the depths of the considered data, that is, we require different observations to have different depths.
Due to its significance, we define the corresponding property as \emph{separation by depth}:
\begin{definition}
We say that $Q$ \emph{separates  $X=(X_j)_{j \in \N}$ by depth}, if the probability that $X_j$ and $X_k$ have the same depth for $j\neq k$ is zero. 
\end{definition}

In order to derive the asymptotic distribution of the estimator $\hat{p}_n(\pi)$, we have to make  some assumptions on the dependence structure of the data-generating time series.
We assume that $(X_j)_{j \in \N}$ is a functional of an absolutely regular time series  $(Z_n)_{n \in \Z}$. 
For this, we recall the following concepts: 
\begin{definition}
Let $\left(\Omega, \mathcal{F}, P\right)$  be a probability space. Given two sub-$\sigma$-fields
$\mathcal{A}, \mathcal{B}\subset \mathcal{F}$, we define
\begin{align*}
\beta(\mathcal{A}, \mathcal{B})\defeq \sup\sum\limits_{i, j}\left|P\left(A_i\cap B_j\right)-P\left(A_i\right)P\left(B_j\right)\right|,
\end{align*}
where the supremum is  taken over all partitions $A_1, \ldots, A_{I}\in \mathcal{A}$ of $\Omega$, and over all partitions $B_1, \ldots, B_J\in \mathcal{B}$ of $\Omega$.
\end{definition}

\begin{definition}
The time series $(Z_n)_{n \in \N}$ is called {\em absolutely regular} with coefficients $\beta_m$, $m\geq 1$, if
\begin{align*}
\beta_m\defeq \sup_{n\in \mathbb{Z}}\beta\left(\mathcal{F}_{-\infty}^{n}, \mathcal{F}_{n+m+1}^{\infty}\right)\longrightarrow  0,
\end{align*}
as $m\rightarrow\infty$. 
Here, $\mathcal{F}_k^l$ denotes the $\sigma$-field generated by the random variables $Z_k,\ldots ,Z_l$.
\end{definition}
Let $(X_j)_{j \in \N}$, be an $\mathbb{R}^d$-valued stationary time series, and let $(Z_n)_{n \in \Z}$, be a stationary time series with values in some measurable space $S$. We say that $(X_j)_{j \in \N}$ is a functional of the time series $(Z_n)_{n\in \Z}$ if there exists a measurable function $f:S^{\mathbb{Z}}\longrightarrow \mathbb{R}^d$ such that, for all $j \in \N$,
\begin{align*}
X_j=f\left((Z_{j+n})_{n\in \mathbb{Z}}\right).
\end{align*}

\begin{definition}
We call $(X_j)_{j\in \N}$ a {\em 1-approximating functional} with constants $a_m$, $m\geq 1$, if for any $m\geq 1$ , there exists a function 
$f_m:S^{2m+1}\longrightarrow\mathbb{R}^d$ such that for every $i\in \mathbb{N}$
\begin{align*}
\E \|X_i-f_m(Z_{i-m}, \ldots, Z_{i+m})\|\leq a_m.
\end{align*}
\end{definition}

This class provides a good compromise between richness and tractability. In particular it contains many useful subclasses and examples like the MA($\infty$) process, Baker's map and continuous fractions (cf. \cite{schnurr:dehling:2017} Section 2.2). 

Given the above definitions, we state the main result of this section for  corresponding time series:
\begin{theorem}\label{thm:limit_theorem_1}
Let $(X_j)_{j \in \N}$ be a stationary 1-approximating functional of the absolutely regular time series $(Z_n)_{n \in \Z}$. Let  $\beta_k$, $k\geq 1$,  denote the mixing coefficients of the time series $(Z_n)_{n \in \Z}$, and let $a_k$, $k\geq 1$, denote the 1-approximating constants. Assume that
\begin{align*}
\sum\limits_{m=1}^{\infty}\sqrt{a_m}< \infty \ \text{ and }  \ \sum\limits_{k=1}^{\infty}\beta_k<\infty.
\end{align*}
Furthermore, assume that $Q$ separates $(X_j)_{j \in \N}$ by depth and that the distribution functions of $D_Q(X_{j})-D_Q(X_1)$ are  Lipschitz continuous for any $j\in \{2, \ldots, p\}$.
Additionally, assume that Tukey's depth with respect to $Q$ is Lipschitz continuous. 
 Then, as $n\rightarrow \infty$, 
\begin{align*}
\sqrt{n}\left(\hat{p}_n(\pi)-p(\pi)\right)\overset{\mathcal{D}}{\longrightarrow} N(0, \sigma^2), 
\end{align*}
where 
\begin{align*}
\sigma^2\defeq \Var\left(1_{\{\Pi_Q(X_{1}, \ldots, X_{p})=\pi\}}\right)+2\sum\limits_{m=2}^{\infty}\Cov\left(1_{\{\Pi_Q(X_{1}, \ldots, X_{p})=\pi\}}, 1_{\{\Pi_Q(X_{m}, \ldots, X_{m+p-1})=\pi\}}\right).
\end{align*}
\end{theorem}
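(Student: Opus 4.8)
The plan is to recognize $\hat p_n(\pi)$ as a sample mean of the stationary, bounded sequence $U_i \defeq 1_{\{\Pi_Q(X_{i}, \ldots, X_{i+p-1})=\pi\}}$, $i \ge 1$, and to apply an existing CLT for 1-approximating functionals of absolutely regular processes (of the type found in \cite{schnurr:dehling:2017} and the references therein). The key point is that the process $(U_i)_i$ is itself a 1-approximating functional of $(Z_n)_{n\in\Z}$ with summable approximating constants, so that the hypothesis $\sum_m \sqrt{a_m} < \infty$, $\sum_k \beta_k < \infty$ transfers to $(U_i)_i$ and the classical CLT yields $\sqrt n(\hat p_n(\pi) - p(\pi)) \to N(0,\sigma^2)$ with the stated long-run variance; convergence of the series defining $\sigma^2$ also follows from summability of the mixing and approximating coefficients. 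So the proof reduces to two verifications: (i) $P(\Pi_Q(X_1,\dots,X_p) = \pi$ and some $D_Q(X_j) = D_Q(X_k)) = 0$, i.e. that the indicator is a.s. determined by strict depth inequalities, which is exactly the separation-by-depth assumption; and (ii) the 1-approximation property for $(U_i)_i$.

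\textbf{Main step: approximating the indicators.} For (ii), fix $m$ and let $f_m : S^{2m+1} \to \R^d$ be the approximating function for $X_1$ with $\E\|X_i - f_m(Z_{i-m},\dots,Z_{i+m})\| \le a_m$. Define $\tilde X_i^{(m)} \defeq f_{m'}(Z_{i-m'},\dots,Z_{i+m'})$ for a suitable $m' = m'(m)$ (e.g. $m' \approx m/2$ or $m' \approx \lfloor (m-p)/2\rfloor$, chosen so that $\tilde X_i^{(m)}, \dots, \tilde X_{i+p-1}^{(m)}$ all depend only on $Z_{i-m},\dots,Z_{i+m}$), and set $\tilde U_i^{(m)} \defeq 1_{\{\Pi_Q(\tilde X_i^{(m)}, \dots, \tilde X_{i+p-1}^{(m)}) = \pi\}}$, which is a function of $Z_{i-m},\dots,Z_{i+m}$ only. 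We then need $\E|U_i - \tilde U_i^{(m)}| \le a_m'$ with $\sum_m \sqrt{a_m'} < \infty$. Since $U_i, \tilde U_i^{(m)} \in \{0,1\}$, we have $\E|U_i - \tilde U_i^{(m)}| = P(U_i \ne \tilde U_i^{(m)})$, and $U_i \ne \tilde U_i^{(m)}$ forces the depth pattern of $(X_i,\dots,X_{i+p-1})$ to differ from that of $(\tilde X_i^{(m)},\dots,\tilde X_{i+p-1}^{(m)})$. Because $\Pi_Q$ changes only when some depth ordering flips, this in turn requires, for some $j \ne k$, that $D_Q(X_{i+j-1}) - D_Q(X_{i+k-1})$ and $D_Q(\tilde X^{(m)}_{i+j-1}) - D_Q(\tilde X^{(m)}_{i+k-1})$ lie on opposite sides of $0$, which forces $|D_Q(X_{i+j-1}) - D_Q(X_{i+k-1})|$ to be small whenever the perturbation is small. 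Quantitatively: by Lipschitz continuity of $D_Q$ (assumed), $|D_Q(X_{i+j-1}) - D_Q(\tilde X^{(m)}_{i+j-1})| \le L\|X_{i+j-1} - \tilde X^{(m)}_{i+j-1}\|$, so a pattern flip between index $j$ and $k$ implies $|D_Q(X_{i+j-1}) - D_Q(X_{i+k-1})| \le L(\|X_{i+j-1} - \tilde X^{(m)}_{i+j-1}\| + \|X_{i+k-1} - \tilde X^{(m)}_{i+k-1}\|)$. Splitting on whether the right-hand side exceeds $\sqrt{a_m}$ or not, using the Lipschitz-continuity of the distribution function of $D_Q(X_{i+j-1}) - D_Q(X_{i+k-1})$ (assumed; note stationarity gives $D_Q(X_{i+j-1})-D_Q(X_{i+k-1}) \overset{d}{=} D_Q(X_{|j-k|+1}) - D_Q(X_1)$ up to sign) on the small side and Markov's inequality together with $\E\|X_i - \tilde X_i^{(m)}\| \lesssim a_{m'(m)}$ on the large side, we obtain $P(U_i \ne \tilde U_i^{(m)}) \lesssim \sqrt{a_{m'(m)}}$ after summing over the finitely many pairs $(j,k)$. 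Choosing $m'(m)$ linear in $m$ keeps $\sum_m \sqrt{a_{m'(m)}}$ summable (up to a constant factor), which is what the CLT requires. Separation by depth guarantees that the event ``$U_i$ is not determined by strict depth inequalities'' has probability zero, so no further boundary terms appear.

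\textbf{Expected obstacle.} The routine parts are invoking the CLT and checking that $\sigma^2$ is finite; the delicate part is the quantitative stability estimate for the indicator $U_i$ under perturbation of the $X$'s — specifically turning ``a pattern flip forces a near-tie in depths'' into a bound of the right order. This needs care because there are several pairs $(j,k)$ to control, because $D_Q$ need only be Lipschitz (so one really does need the Lipschitz-continuity-of-the-distribution-function hypothesis to bound the probability of a near-tie linearly in the perturbation scale), and because the bookkeeping of indices $m$ vs.\ $m'$ must be done so that $(\tilde U_i^{(m)})_i$ depends on exactly $2m+1$ consecutive innovations while preserving summability of $\sqrt{a_{m'(m)}}$. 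Once this estimate is in place, the result follows from the cited limit theorem with essentially no additional work.
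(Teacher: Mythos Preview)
Your proposal is correct and follows essentially the same route as the paper: the paper also reduces the theorem to a CLT for 1-approximating functionals of absolutely regular sequences (there via Theorem 18.6.3 of Ibragimov--Linnik), and then proves in a separate lemma that the indicator process $U_i=1_{\{\Pi_Q(X_i,\ldots,X_{i+p-1})=\pi\}}$ is itself a 1-approximating functional with constants of order $\sqrt{a_m}$, using exactly your dichotomy ``either some depth difference is $<2\varepsilon$ (controlled by the Lipschitz distribution function) or some $|D_Q(X_{i+j})-D_Q(\tilde X_{i+j}^{(m)})|>\varepsilon$ (controlled by Lipschitz $D_Q$ plus Markov)'' and the choice $\varepsilon=\sqrt{a_m}$. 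The only cosmetic difference is that the paper does not bother with your $m'\!=\!m'(m)$ index adjustment and simply takes $\tilde X_i^{(m)}=f_m(Z_{i-m},\ldots,Z_{i+m})$, absorbing the $p$-shift into the notion of approximating constants.
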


\begin{remark}
Let us emphasize, that we do not explicitly need properties of Tukey's depth in order to prove the theorem. Using other depth concepts is also possible, as long as Lipschitz continuities and the separation by depth property are satisfied. 
\end{remark}

As discussed earlier, the assumption imposed on $(X_j)_{j \in \N}$ in Theorem \ref{thm:limit_theorem_1}
can be considered non-restrictive.
It is however, less obvious to explicitly state examples for which the conditions relating  $(X_j)_{j \in \N}$  and $Q$ hold. In the following, we provide such an example:

\begin{example}
For all practical purposes we are considering here, one is typically interested in time series with values in a bounded domain (earthquakes, animal movement, rainfall). For simplicity, we consider $B_1(0)$, the open disc around zero with radius 1. Moreover, we consider any time series $(X_j)_{j \in \N}$ as in Theorem \ref{thm:limit_theorem_1} having values in $B_1(0)$. In this situation, we are aiming at finding  a distribution $Q$ satisfying both Lipschitz conditions in Theorem \ref{thm:limit_theorem_1}. 
For this, pick what might be the simplest choice for a depth function, that is, 
\begin{align} \label{wishdepth}
D(v):=\frac{1}{2}(1-||v||)\cdot 1_{B_1(0)}.
\end{align}
The depth here only depends on the distance to zero. The maximum is attained at the origin. The first (non-trivial) question is, whether a distribution $Q$ exists, which yields this depth function. It follows from \eqref{wishdepth} that this distribution has to be rotation invariant and that it has to have uniform marginal distributions. In \cite{perlman2011squaring}, the existence of such a distribution $Q$ is shown. It is unique and has the probability density function
\[
f(v_1,v_2)=\frac{1}{2\pi \sqrt{1-v_1^2-v_2^2}} \text{ on } v_1^2+v_2^2<1.
\]
Assume that the marginals of the time series $(X_j)_{j \in \N}$ admit continuous probability density functions (concentrated on $B_1(0)$ ). Then, $Q$ separates $(X_j)_{j \in \N}$ by depth and $D_Q$ is Lipschitz continuous. What remains to be shown is that the distribution functions of $D_Q(X_{j})-D_Q(X_1)$ are  Lipschitz continuous. To this end, we show that the respective probability density functions are bounded: 

Fix $j\in\N$. By definition, we have $D_Q(X_j)-D_Q(X_1)=(1/2)(||X_1||-||X_j||)$.
In order to show that the density of this random variable is bounded, we transform the density of $(X_1,X_j)'$ in the following way: 
\[
\Phi:\left(\begin{array}{c} x_1 \\ x_2 \\ y_1 \\ y_2 \end{array}\right) \mapsto
\left(\begin{array}{c} (1/2)(\sqrt{x_1^2+x_2^2}-\sqrt{y_1^2+y_2^2}) \\ x_2 \\ y_1 \\ y_2 \end{array}\right).
\]
On $A:=B_1(0)\cap (0,1)\times (0,1)$ this is a diffeomorphism. The same holds true on the other quadrants. We consider here only the first quadrant in order to keep the notation simple. The inverse function is
\[
\Phi^{-1}:\left(\begin{array}{c} z_1 \\ z_2 \\ z_3 \\ z_4 \end{array}\right) \mapsto
\left(\begin{array}{c} \sqrt{(2z_1+\sqrt{z_3^2+z_4^2})^2-z_2^2} \\ z_2 \\ z_3 \\ z_4 \end{array}\right)
\]
By the transformation theorem the probability density function of $D_Q(X_j)-D_Q(X_1)$ on $\Phi(A)$ is
\[
f(z_1):=\int_0^1 \int_0^{1-z_4^2} \int_0^{2z_1+\sqrt{z_3^2+z_4^2}} g(\Phi^{-1}(z)) |\text{det} J_{\Phi^{-1}}| dz_2 dz_3 dz_4
\]
where $g$ is the bounded Lebesgue density of $(X_1, X_j)'$. Since $g(\Phi^{-1}(z))$ is bounded, it can be taken out of the integrals. The determinant of the Jabobian is
\[
\frac{2\cdot(2z_1+\sqrt{z_3^2+z_4^2})}{\sqrt{(2z_1+\sqrt{z_3^2+z_4^2})^2-z_2^2}}. 
\]
Integrating this function in $z_2$ we obtain as the primitive function 
\[
2c \arcsin\left(  \frac{z_2}{c}
\right)
\]
with $c=2z_1+\sqrt{z_3^2+z_4^2}$ being bounded on the domain. Since arcsin is also bounded, we obtain the probability density function $f$ as a double integral of a bounded function on a bounded domain. Hence, $f$ is bounded and the assumptions of the theorem are satisfied.

\end{example}

\begin{proof}[Proof of Theorem \ref{thm:limit_theorem_1}]
We apply Theorem 18.6.3 of \cite{ibragimov:linnik:1971}
to the partial sums of the random variables $(Y_j)_{j \in \N}$
defined by
\begin{align*}
Y_j\defeq 1_{\{\Pi_Q(X_{j+1}, \ldots, X_{j+p})=\pi\}}.
\end{align*}
According to the following lemma 
$(Y_j)_{j \in \N}$  is a 1-approximating functional of the time series $(Z_n)_{n \in \Z}$ with approximating constants $\sqrt{a_k}$, $k\geq 1$. Thus, the conditions of Theorem 18.6.3 of \cite{ibragimov:linnik:1971} are satisfied.
\end{proof}

\begin{lemma}\label{lem:1-approx}
Let  $(X_j)_{j \in \N}$ be a 1-approximating functional of the time series $(Z_n)_{n \in \Z}$ with approximating coefficients $a_m$, $m\in \mathbb{N}$. Assume that $Q$ separates  $(X_j)_{j \in \N}$ by depth and that the distribution function of $D_Q(X_j)-D_Q(X_1)$ is Lipschitz continuous for any $j\in \N$.
Additionally, assume that Tukey's depth with respect to $Q$ is Lipschitz continuous.
Then, for any depth pattern $\pi$,
\begin{align*}
W_i\defeq 1_{\{\Pi_Q(X_{i+1}, \ldots, X_{i+p})=\pi\}}
\end{align*}
is a 1-approximating functional of the time series $(Z_j)_{j \in \N}$ with approximating coefficients $\sqrt{a_m}$, $m\in \mathbb{N}$.
\end{lemma}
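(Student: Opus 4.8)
The plan is to produce an explicit finite-window approximant for $W_i$ and bound the probability that it disagrees with $W_i$, thereby reducing the statement to a one-dimensional anti-concentration estimate. The starting point is that $W_i$ depends on $X_{i+1},\dots,X_{i+p}$ only through the depth pattern $\Pi_Q(X_{i+1},\dots,X_{i+p})$, which in turn depends only on the pre-order induced on the indices by the real numbers $D_Q(X_{i+1}),\dots,D_Q(X_{i+p})$. Accordingly I would replace each $X_{i+l}$ by its $1$-approximation: with $f_m$ the functions from the definition of a $1$-approximating functional, set $\tilde X_{i+l}^{(m)}:=f_m(Z_{i+l-m},\dots,Z_{i+l+m})$ for $l=1,\dots,p$ and define $W_i^{(m)}:=1_{\{\Pi_Q(\tilde X_{i+1}^{(m)},\dots,\tilde X_{i+p}^{(m)})=\pi\}}$. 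Since $D_Q$ is Lipschitz, hence Borel measurable, $W_i^{(m)}$ is a measurable function of $Z_{i+1-m},\dots,Z_{i+p+m}$; this window has length $2m+p$ and is off-center, but absorbing the shift by $p$ into the index only replaces the constants $\sqrt{a_m}$ by a fixed multiple of $\sqrt{a_{m-p}}$, which is irrelevant for the summability $\sum_m\sqrt{a_m}<\infty$ used in Theorem~\ref{thm:limit_theorem_1}. So it suffices to show $\E|W_i-W_i^{(m)}|=P(W_i\ne W_i^{(m)})=O(\sqrt{a_m})$ uniformly in $i$.

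Next I would localize the disagreement to a single pair of coordinates. Since $Q$ separates $(X_j)_{j\in\N}$ by depth, with probability one the numbers $D_Q(X_{i+1}),\dots,D_Q(X_{i+p})$ are pairwise distinct, so $\Pi_Q(X_{i+1},\dots,X_{i+p})$ records precisely their strict order; on that event it can coincide with $\Pi_Q(\tilde X_{i+1}^{(m)},\dots,\tilde X_{i+p}^{(m)})$ only if the latter depths are also pairwise distinct and in the same order. Hence $W_i\ne W_i^{(m)}$ — equivalently, exactly one of the two patterns equals $\pi$ — forces, for some pair $j<k$, the depth-ordering to be reversed or collapsed into a tie by the perturbation, i.e. $(D_Q(X_{i+j})-D_Q(X_{i+k}))(D_Q(\tilde X_{i+j}^{(m)})-D_Q(\tilde X_{i+k}^{(m)}))\le 0$. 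Writing $R_{jk}:=(D_Q(\tilde X_{i+j}^{(m)})-D_Q(X_{i+j}))-(D_Q(\tilde X_{i+k}^{(m)})-D_Q(X_{i+k}))$ and using that $|a|\le|a-b|$ whenever $ab\le 0$, this forces $|D_Q(X_{i+j})-D_Q(X_{i+k})|\le|R_{jk}|$; a union bound over the $\binom{p}{2}$ pairs then reduces matters to bounding $P(|D_Q(X_{i+j})-D_Q(X_{i+k})|\le|R_{jk}|)$ for each fixed $j<k$.

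For a single pair I would run the usual smoothing trick: for $\varepsilon>0$ write $P(|D_Q(X_{i+j})-D_Q(X_{i+k})|\le|R_{jk}|)\le P(|D_Q(X_{i+j})-D_Q(X_{i+k})|\le\varepsilon)+P(|R_{jk}|>\varepsilon)$. By stationarity $D_Q(X_{i+j})-D_Q(X_{i+k})$ has the same law as $D_Q(X_1)-D_Q(X_{1+(k-j)})$ with $k-j\in\{1,\dots,p-1\}$, whose distribution function is Lipschitz (with the same constant as that of $D_Q(X_{1+(k-j)})-D_Q(X_1)$, which is assumed), so the first term is $\le 2L\varepsilon$ for a suitable $L$. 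For the second term, Lipschitz continuity of $D_Q$ with constant $C$ and the defining inequality of $1$-approximation give $\E|R_{jk}|\le C\,\E\|X_{i+j}-\tilde X_{i+j}^{(m)}\|+C\,\E\|X_{i+k}-\tilde X_{i+k}^{(m)}\|\le 2Ca_m$, so Markov's inequality bounds it by $2Ca_m/\varepsilon$. Choosing $\varepsilon$ of order $\sqrt{a_m}$ balances the two contributions and yields a bound of order $\sqrt{a_m}$ for each pair, hence $P(W_i\ne W_i^{(m)})\le\kappa\sqrt{a_m}$ with $\kappa=\kappa(p,L,C)$ independent of $i$; together with the first paragraph this proves the lemma.

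The main obstacle is not analytic: it is the bookkeeping in the second paragraph, namely the claim that two distinct depth patterns — \emph{even when ties are allowed in the approximant} — must disagree on the order of some coordinate pair, which is precisely what licenses the reduction to a bivariate problem, together with the trade-off in the third paragraph between the anti-concentration bound supplied by the Lipschitz distribution functions and the $L^1$-control supplied by $1$-approximation. The extraneous multiplicative constant $\kappa$ and the index shift by $p$ are harmless, since only $\sum_m\sqrt{a_m}<\infty$ is needed downstream.
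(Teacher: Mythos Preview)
Your proposal is correct and follows essentially the same route as the paper: construct $W_i^{(m)}$ from the $f_m$-approximants, reduce $\E|W_i-W_i^{(m)}|$ to the probability that the two depth patterns differ, then split into an anti-concentration term controlled by the Lipschitz distribution functions of $D_Q(X_j)-D_Q(X_1)$ and a perturbation term controlled via Markov and the Lipschitz continuity of $D_Q$, balancing with $\varepsilon\sim\sqrt{a_m}$. The only cosmetic difference is that the paper fixes the threshold $\varepsilon$ first and uses the implication ``all pairs are $2\varepsilon$-separated \emph{and} all approximations are $\varepsilon$-close $\Rightarrow$ same pattern'', whereas you first localise to a single reversed/tied pair via the inequality $|a|\le|a-b|$ when $ab\le0$ and only then introduce $\varepsilon$; your extra remarks on ties in the approximant and on the index shift by $p$ are correct and make explicit points the paper leaves implicit.
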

\begin{remark}
As for Theorem \ref{thm:limit_theorem_1} let us emphasize, that we do not explicitly need properties of Tukey's depth in order to prove the lemma. Using other depth concepts is also possible, as long as Lipschitz continuity and the separation by depth property are satisfied. 
\end{remark}

\begin{proof}
Define $X_i^{(m)}=f_m(Z_{i-m}, \ldots, Z_{i+m})$ and
$W_i^{(m)}\defeq 1_{\{\Pi_Q(X_{i+1}^{(m)}, \ldots, X_{i+p}^{(m)})=\pi\}}$.
Observe that for all $\epsilon > 0$ and all integers $i\geq 1$ the following implication holds: 
If $\left|D_Q(X_{i+j})-D_Q(X_{i+k})\right|\geq 2\epsilon$
for all $0\leq j< k \leq p$
and $\left|D_Q(X_{i+j})-D_Q(X^{(m)}_{i+j})\right|\leq \epsilon$
for all $0\leq j \leq p$, then  $\Pi_Q\left(X_1, \ldots, X_p\right)= \Pi_Q\left(X^{(m)}_1, \ldots, X^{(m)}_p\right)$.
As a result,  $\Pi_Q\left(X_1, \ldots, X_p\right)\neq \Pi_Q\left(X^{(m)}_1, \ldots, X^{(m)}_p\right)$ implies that either 
the difference of the depths of $X_i$ and $X_j$ is smaller than 
$2\varepsilon$ for some $i, j\in \{1, \ldots, p\}$, $i\neq j$, or the difference in depths of $X_i$ and $X^{(m)}_i$ is bigger than $\varepsilon$ for some
$i\in \{1, \ldots, p\}$.

Then, for all $\varepsilon>0$
\begin{align*}
&\E \left(\left|W_i-W_i^{(m)}\right|\right)\\
&\leq \E \left(1_{\{\Pi_Q(X_{i+1}^{(m)}, \ldots, X_{i+p}^{(m)})\neq \Pi_Q(X_{i+1}, \ldots, X_{i+p})\}}\right)\\
&\leq \sum\limits_{j\neq k}P \left(|D_Q(X_{i+j})-D_Q(X_{i+k})|<2\varepsilon\right)+ \sum\limits_{j=1}^pP \left(|D_Q(X_{i+j}^{(m)})-D_Q(X_{i+j})|>\varepsilon\right)\\
&\leq \sum\limits_{j\neq k}P \left(|D_Q(X_{i+j})-D_Q(X_{i+k})|<2\varepsilon\right)+ \sum\limits_{j=1}^pP \left(L\|X_{i+j}^{(m)}-X_{i+j}\|>\varepsilon\right)\\
&\leq p(p-1)2C\varepsilon+\frac{L}{\varepsilon} \sum\limits_{j=1}^pE \left(\|X_{i+j}^{(m)}-X_{i+j}\|\right)\\
&\leq p(p-1)2C\varepsilon+\frac{Lp}{\varepsilon}a_m
\end{align*}
with $C$ denoting the Lipschitz constant of the distribution function of $D_Q(X_j)-D_Q(X_1)$.
Choosing $\varepsilon=\sqrt{a_m}$, the assertion follows. 
\end{proof}

The following result gives sufficient conditions for $Q$ to separate a time series by depth.

\begin{lemma}\label{lem:unique}
Assume  that the reference measure $Q$ corresponds to an elliptical distribution and that all two-dimensional distributions of the time series $(X_j)_{j \in \N}$
have a density with respect to the Lebesgue measure on $\mathbb{R}^d$
 and that the support of the marginal distribution $F$ of $X_1$ is contained in the support of $Q$.
Then, 
the probability that two datapoints $X_i$ and $X_j$ have the same depth is zero. 
\end{lemma}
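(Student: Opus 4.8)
The plan is to exploit the ellipsoidal symmetry of $Q$ to reduce the halfspace depth to a \emph{strictly monotone} function of a single quadratic form, and then to observe that equality of two such quadratic forms is a Lebesgue-null event in $\R^{2d}$. Concretely, write $Q$ as the law of $\mu+\Sigma^{1/2}W$ with $W$ spherically symmetric, where $\Sigma^{1/2}$ is the symmetric square root of $\Sigma$. By the affine equivariance of halfspace depth one obtains $D_Q(x)=D_{\mathcal{L}(W)}\big(\Sigma^{-1/2}(x-\mu)\big)$, and rotational invariance of $\mathcal{L}(W)$ forces this to depend on $x$ only through the Mahalanobis radius $r(x):=\sqrt{(x-\mu)^{\transp}\Sigma^{-1}(x-\mu)}$; thus $D_Q(x)=\psi(r(x))$ for some $\psi\colon[0,\infty)\to[0,\tfrac12]$.

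The step I expect to be the main obstacle is showing that $\psi$ is \emph{strictly} decreasing on $[0,R)$, where $R\in(0,\infty]$ denotes the radius of the support of $\mathcal{L}(W)$. I would do this by identifying the minimizing halfspace for a spherically symmetric law explicitly: for $y$ with $\|y\|=r$ the infimum over $\phi\in\mathcal{S}^{d-1}$ is attained at $\phi=y/\|y\|$, which gives $\psi(r)=P\big(W^{(1)}\ge r\big)$ with $W^{(1)}$ the first coordinate of $W$. The marginal density of $W^{(1)}$ is strictly positive on $(-R,R)$ (the radial density of $\mathcal{L}(W)$ cannot vanish on an entire terminal interval $[s,R^{2})$ by definition of $R$), so this survival function, hence $\psi$, is strictly decreasing — and in particular injective — on $[0,R)$. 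Alternatively one may simply quote the classical description of the depth regions of an elliptical distribution as the concentric ellipsoids $\{x:r(x)\le c\}$ together with their strict nesting inside $\supp(Q)$ (cf.\ \cite{zuo2000structural} and the references on halfspace depth cited above).

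It then remains to combine this with the stated hypotheses. Since $F$ has a Lebesgue density and the topological boundary of $\supp(Q)$ is contained in the ellipsoid $\{x:r(x)=R\}$, which is a Lebesgue-null set, the assumption $\supp(F)\subseteq\supp(Q)$ yields $r(X_i),r(X_j)\in[0,R)$ almost surely; on this event injectivity of $\psi$ shows that $D_Q(X_i)=D_Q(X_j)$ forces $r(X_i)=r(X_j)$, i.e.\ $(X_i,X_j)\in M:=\{(u,v):(u-\mu)^{\transp}\Sigma^{-1}(u-\mu)=(v-\mu)^{\transp}\Sigma^{-1}(v-\mu)\}$. Because $\Sigma^{-1}$ is positive definite, the map $(u,v)\mapsto(u-\mu)^{\transp}\Sigma^{-1}(u-\mu)-(v-\mu)^{\transp}\Sigma^{-1}(v-\mu)$ is a non-constant quadratic polynomial on $\R^{2d}$, so $M$ has Lebesgue measure zero; and since by hypothesis the joint law of $(X_i,X_j)$ is absolutely continuous with respect to Lebesgue measure on $\R^{2d}$, we conclude $P\big(D_Q(X_i)=D_Q(X_j)\big)=P\big((X_i,X_j)\in M\big)=0$, which is the assertion.
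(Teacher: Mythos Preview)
Your proof is correct and rests on the same geometric idea the paper uses: for an elliptical reference measure the halfspace-depth contours are surfaces of ellipsoids, so equality of depths reduces to equality of Mahalanobis radii, and the tie set is then a Lebesgue-null subset of $\R^{2d}$. The paper's argument is much terser --- it simply cites Theorem~3.4 of \cite{zuo2000structural} for the shape of the contours and then appeals directly to the joint density --- which is precisely the alternative you offer at the end of your monotonicity step. Your primary route is more self-contained: you derive the depth explicitly as $\psi(r)=P(W^{(1)}\ge r)$ via affine equivariance and spherical symmetry, and argue strict monotonicity of $\psi$ on $[0,R)$ by hand rather than by citation. This has the additional merit of making transparent why the support hypothesis $\supp(F)\subseteq\supp(Q)$ is needed (to confine $X_i,X_j$ to the region where $\psi$ is injective and the depth is strictly positive), a point the paper's two-line proof does not address.
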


\begin{proof}
According to Theorem 3.4 in \cite{zuo2000structural} 
the depth contours of halfspace depth are surfaces of ellipsoids and, therefore, nullsets with respect to the Lebesgue measure on $\mathbb{R}^d$. 
     Since all  bivariate marginal distributions of $(X_j)_{j\in \mathbb{N}}$ have a  density with respect to the Lebesgue measure,   the probability of two samples $X_i$ and $X_j$ having the same depth is zero.
\end{proof}



\section{Depth patterns based on an unknown reference distribution}  \label{sec:second_cases}

In this section, we consider the depth patterns of time series  $X=(X_j)_{j\in \mathbb{N}}$ with respect to an unknown reference distribution $Q$, i.e., cases (C) and (D) distinguished in Section \ref{sec:math}. 
However, $Q$ can be approximated  through observations generated by a stationary ergodic time series $Y=(Y_j)_{j\in \mathbb{N}}$  with marginal distribution $Q$.
We assume that the two time series $X$ and $Y$ are defined on the same probability space $\left(\Omega, \mathcal{F}, P\right)$.
$X$ and $Y$ may be independent, dependent or even the same (situation (C)). 


Given the above described setting, we study 
\[
  \hat{q}_{n,m, Q}(\pi):=\frac{1}{n-p+1}\sum\limits_{i=1}^{n-p+1}1_{\left\{\Pi_{Q^{(m)}}(X_{i}, \ldots, X_{i+p-1})=\pi\right\}},
\]
where 
\begin{align*}
Q^{(m)}(\omega)=\frac{1}{m}\sum\limits_{j=1}^m\delta_{Y_j(\omega)}.
\end{align*}
Our goal is to establish a limit theorem such as Proposition \ref{prop_1:consistent},  this time replacing $Q$ by $Q^{(m)}$ in the approximating sequence.
To start with, imagine
 that the time series data given by $X$ is fixed, while the number of observations $n$ from $Y$ increases, that is, we get to know  the reference distribution $Q$ better as $n$ tends to infinity. It could happen that then the depth pattern of, say, the data points $x_1, x_2, x_3 \in \mathbb{R}^2$ changes as more data points of $Y$ emerge; see Figure \ref{fig:empirical_depth}.

\begin{figure}
\begin{center}
\scalebox{0.5}{
\begin{tikzpicture}[label distance=-1mm]
    \begin{scope}[thick,font=\large]
\fill[ggplot_green!60]      (0,4) -| (4,0) -- cycle;
\fill[ggplot_orange!60]    (-0.5,4)   -| (-3,-4) -- cycle;
\fill[ggplot_blue!60]    (-3,-4)   -| (4,-1) -- cycle;
\draw[ggplot_green] (0,4) -- (4,0) node [above left] {};
\draw[ggplot_orange] (-0.5,4) -- (-3,-4) node [above left] {};
\draw[ggplot_blue] (-3,-4) -- (4,-1) node [above left] {};

    \draw [->] (-3,0) -- (4.2,0) node [above left] {};
    \draw [->] (0,-4) -- (0,4.2) node [below right] {};

\draw (3,3) circle[radius=4pt];
\draw (-1,1) circle[radius=4pt];
\draw (1,-2) circle[radius=4pt];
\node[circle,inner sep=4pt,fill=black,label= above left:{$x_1$}] at (2,2) {};
\node[circle,inner sep=4pt,fill=black,label= above left:{$x_2$}] at (-2.1,-1) {};
\node[circle,inner sep=4pt,fill=black,label= above left:{$x_3$}] at (-1,-3.15) {};
\node (a) at (0.5,-4.8) {$D(x_1)$ = 1/3, $D(x_2)$ = 0, $D(x_3)$ = 0};
    \end{scope}
\end{tikzpicture}


\begin{tikzpicture}[label distance=-1mm]
    \begin{scope}[thick,font=\large]

\fill[ggplot_green!60]      (0,4) -| (4,0) -- cycle;
\fill[ggplot_orange!60]    (-0.5,4)   -| (-3,-4) -- cycle;
\fill[ggplot_blue!60]    (-3,-4)   -| (4,-1) -- cycle;
\draw[ggplot_green] (0,4) -- (4,0) node [above left] {};
\draw[ggplot_orange] (-0.5,4) -- (-3,-4) node [above left] {};
\draw[ggplot_blue] (-3,-4) -- (4,-1) node [above left] {};

    \draw [->] (-3,0) -- (4.2,0) node [above left] {};
    \draw [->] (0,-4) -- (0,4.2) node [below right] {};

\draw (3,3) circle[radius=4pt];
\draw (-1,1) circle[radius=4pt];
\draw (1,-2) circle[radius=4pt];
\draw (-1,-2) circle[radius=4pt]; 
\draw (-2,-3) circle[radius=4pt]; 
\draw (-1,-4) circle[radius=4pt]; 
\node[circle,inner sep=4pt,fill=black,label= above left:{$x_1$}] at (2,2) {};
\node[circle,inner sep=4pt,fill=black,label= above left:{$x_2$}] at (-2.1,-1) {};
\node[circle,inner sep=4pt,fill=black,label= above left:{$x_3$}] at (-1,-3.15) {};
\node (a) at (0.5,-4.8) {$D(x_1)$= 1/6, $D(x_2)$= 0, $D(x_3)$= 1/6};

    \end{scope}
\end{tikzpicture}

\begin{tikzpicture}[label distance=-1mm]
    \begin{scope}[thick,font=\large]

\fill[ggplot_green!60]      (0,4) -| (4,0) -- cycle;
\fill[ggplot_orange!60]    (-0.5,4)   -| (-3,-4) -- cycle;
\fill[ggplot_blue!60]    (-3,-4)   -| (4,-1) -- cycle;
\draw[ggplot_green] (0,4) -- (4,0) node [above left] {};
\draw[ggplot_orange] (-0.5,4) -- (-3,-4) node [above left] {};
\draw[ggplot_blue] (-3,-4) -- (4,-1) node [above left] {};

    \draw [->] (-3,0) -- (4.2,0) node [above left] {};
    \draw [->] (0,-4) -- (0,4.2) node [below right] {};

\draw (3,3) circle[radius=4pt];
\draw (-1,1) circle[radius=4pt];
\draw (1,-2) circle[radius=4pt];
\draw (-1,-2) circle[radius=4pt]; 
\draw (-2,-3) circle[radius=4pt]; 
\draw (-1,-4) circle[radius=4pt]; 
\draw (2,-3) circle[radius=4pt]; 
\node[circle,inner sep=4pt,fill=black,label= above left:{$x_1$}] at (2,2) {};
\node[circle,inner sep=4pt,fill=black,label= above left:{$x_2$}] at (-2.1,-1) {};
\node[circle,inner sep=4pt,fill=black,label= above left:{$x_3$}] at (-1,-3.15) {};
\node (a) at (0.5,-4.8) {$D(x_1)= 1/7$, $D(x_2)= 0$, $D(x_3)= 2/7$};

    \end{scope}
\end{tikzpicture}
}
\end{center}
\caption{As $Q^{(m)}$ approaches $Q$, the depth pattern of $(x_1,x_2,x_3)$ changes: $\Pi_{Q^{(3)}}(x_1,x_2,x_3)=(1,3,3)$, $\Pi_{Q^{(6)}}(x_1,x_2,x_3)=(2,3,2)$ and $\Pi_{Q^{(7)}}(x_1,x_2,x_3)=(2,3,1)$. The black dots are the values of $x_i$ while the white dots denote the point-masses of $Q^{(m)}$.}
\label{fig:empirical_depth}
\end{figure}

A first step is hence to show that for $x\in \R^d$
\begin{align*}
D_{Q^{(m)}}(x)\longrightarrow D_Q(x) \ \text{a.s.}
\end{align*}
The $x$ can be seen as $(X_{i+1}(\omega), ..., X_{i+p}(\omega))$ for fixed $\omega$ and $i$.
Secondly, we let the number of observations of $X$ tend to infinity. 
We state the latter (our main result) first and provide the former as a proposition, subsequently.

\begin{theorem} \label{thm:consistency(C}
Suppose that  $X=(X_j)_{j\in \mathbb{N}}$  and  $Y=(Y_j)_{j\in \mathbb{N}}$  are two stationary ergodic time series defined on the same probability space $\left(\Omega, \mathcal{F}, P\right)$ and with values in $\mathbb{R}^d$. Assume that $Q$ separates $X$ by depth. 
Let $Q^{(m)}$  denote the empirical distribution of $Y_1, \ldots, Y_m$. Then, $\hat{q}_{n,m, Q}(\pi)$ is a consistent estimator of $p_Q(\pi):=P\pr{\Pi_Q(X_1,\ldots,X_{p})=\pi}$ in the following sense: For almost every $\omega$ there exists a subsequence $(m_n)_{n\in \N}$ (depending on this $\omega$), such that
\[
 \lim_{n\rightarrow \infty }\hat{q}_{n, m_n, Q}(\pi)=p_Q(\pi).
\]
\end{theorem}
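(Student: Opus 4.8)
The plan is to combine two convergence statements: (i) a pointwise-in-$\omega$ convergence $D_{Q^{(m)}}(x) \to D_Q(x)$ as $m \to \infty$ (which the authors announce will be proved as a separate proposition, so I may assume it), and (ii) the consistency of the relative-frequency estimator for a \emph{fixed} known reference distribution, namely Proposition \ref{prop_1:consistent}. The technical glue that turns these into a statement about $\hat q_{n,m_n,Q}$ is a diagonal-sequence argument, which is exactly why the conclusion is phrased in terms of an $\omega$-dependent subsequence $(m_n)$ rather than a clean double limit.

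First I would fix a $\pi \in T_p$ and introduce, for each fixed $m$, the estimator with the \emph{frozen} empirical reference measure,
\[
  \hat q^{\,[m]}_{n}(\pi) := \frac{1}{n-p+1}\sum_{i=1}^{n-p+1} 1_{\{\Pi_{Q^{(m)}(\omega)}(X_i,\ldots,X_{i+p-1})=\pi\}}.
\]
For $\omega$ fixed, $Q^{(m)}(\omega)$ is an honest (discrete) probability measure on $\mathbb{R}^d$, so Proposition \ref{prop_1:consistent} applied to the stationary ergodic series $X$ with this known reference measure gives, for almost every $\omega$, $\lim_{n\to\infty}\hat q^{\,[m]}_{n}(\pi) = p_{Q^{(m)}(\omega)}(\pi)$ for every $m$ simultaneously (a countable intersection of full-measure events). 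Next I would show $p_{Q^{(m)}(\omega)}(\pi) \to p_Q(\pi)$ as $m \to \infty$: since $\Pi_{Q^{(m)}}(X_1,\ldots,X_p) = \Pi_Q(X_1,\ldots,X_p)$ whenever the $p$ depths $D_{Q^{(m)}}(X_j)$ are close enough to the $D_Q(X_j)$ and the latter are pairwise separated — and the separation-by-depth hypothesis makes the ``bad'' event of a tie a $P$-null set — dominated convergence (using the announced a.s.\ convergence $D_{Q^{(m)}}(x)\to D_Q(x)$, applied with $x=(X_1(\omega'),\ldots,X_p(\omega'))$, integrated over $\omega'$) yields $P(\Pi_{Q^{(m)}}(X_1,\ldots,X_p)=\pi)\to P(\Pi_Q(X_1,\ldots,X_p)=\pi)$. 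Here one must be a little careful that the reference measure $Q^{(m)}$ itself depends on $\omega$ while the data $X_1,\ldots,X_p$ are being averaged over, but since $X$ and $Y$ live on the same space this is just Fubini together with the a.s.\ convergence of the depths; the separation assumption is on the joint law of the $X_j$'s, which is what is needed.

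Finally I would assemble the diagonal argument. Fix a full-measure set of $\omega$ on which both (i) holds for all $m$ and (ii) $p_{Q^{(m)}(\omega)}(\pi)\to p_Q(\pi)$. For such $\omega$, for each $k$ pick $m=m(k)$ with $|p_{Q^{(m)}(\omega)}(\pi)-p_Q(\pi)|<1/(2k)$, and then pick $n_k$ (increasing, with $n_k \geq k$) large enough that $|\hat q^{\,[m(k)]}_{n_k}(\pi)-p_{Q^{(m(k))}(\omega)}(\pi)|<1/(2k)$; reindex so that along $n=n_k$ we set $m_n := m(k)$ (interpolating arbitrarily on the intermediate indices, or simply passing to the subsequence $n_k$). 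Then $\hat q_{n_k,m_{n_k},Q}(\pi) = \hat q^{\,[m(k)]}_{n_k}(\pi) \to p_Q(\pi)$, which is the claimed statement.

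The main obstacle I anticipate is step (ii) — establishing $p_{Q^{(m)}(\omega)}(\pi)\to p_Q(\pi)$ — and in particular getting the ``depth continuity'' implication uniform enough to push through dominated convergence. One needs that, off a null set of data configurations, an arbitrarily small perturbation of all $p$ depths does not change the pattern; this is where separation by depth is essential (it ensures the minimal gap $\min_{j\neq k}|D_Q(X_j)-D_Q(X_k)|$ is a.s.\ strictly positive), and it must be paired with the a.s.\ convergence $D_{Q^{(m)}}\to D_Q$ uniformly on the relevant (a.s.\ bounded, or at least tight) set of evaluation points. The uniformity in the evaluation point is the subtle part: pointwise a.s.\ convergence of $D_{Q^{(m)}}(x)$ for each $x$ must be upgraded — or circumvented by a compactness/tightness argument on where $(X_1,\ldots,X_p)$ lives — so that the event $\{\Pi_{Q^{(m)}}\neq\Pi_Q\}$ genuinely has probability tending to $0$. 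Everything else (Proposition \ref{prop_1:consistent} for fixed reference, the diagonal extraction) is routine.
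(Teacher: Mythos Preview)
Your step (i) has a genuine gap. Proposition \ref{prop_1:consistent} applies to a \emph{deterministic} reference measure: for such a measure the ergodic theorem produces a $P$-null exceptional set off of which $\hat p_{n,Q}(\pi)\to p_Q(\pi)$. You want to invoke it with the reference measure $Q^{(m)}(\omega)$, but this measure depends on the very same $\omega$ that indexes the data $X_j(\omega)$. Fixing $\omega$ to ``freeze'' $Q^{(m)}(\omega)$ also freezes the entire $X$-trajectory, so there is no remaining randomness over which the ergodic theorem can act; and applying Proposition \ref{prop_1:consistent} separately for every possible value of $Q^{(m)}$ produces uncountably many null sets whose union need not be null. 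Your appeal to Fubini would work only if $X$ and $Y$ were independent (so that one can condition on the $Y$-coordinate of a product space), but the theorem explicitly includes situation (C), where $X=Y$ and no such decoupling exists. The same objection undermines the very definition of your intermediate target $p_{Q^{(m)}(\omega)}(\pi)$ in step (ii): writing it as an integral over a second variable $\omega'$ while $Q^{(m)}$ is evaluated at $\omega$ presupposes precisely the product structure the hypotheses do not grant.

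The paper's argument sidesteps this completely and is in fact shorter. One fixes $\omega$ in the full-measure set on which (a) the conclusion of Proposition \ref{prop_1:consistent} holds for the \emph{true} deterministic $Q$, (b) all depths $D_Q(X_i(\omega))$, $i\ge 1$, are pairwise distinct, and (c) $\sup_x|D_{Q^{(m)}(\omega)}(x)-D_Q(x)|\to 0$ (Proposition \ref{prop:conv_of_depth}). For fixed $n$ the finitely many depths $D_Q(X_1(\omega)),\ldots,D_Q(X_{n+p-1}(\omega))$ have a positive minimum gap $2\varepsilon_n$; by (c) one then picks $m_n$ so large that each empirical depth is within $\varepsilon_n$ of the true one, forcing $\Pi_{Q^{(m_n)}}(X_i,\ldots,X_{i+p-1})(\omega)=\Pi_Q(X_i,\ldots,X_{i+p-1})(\omega)$ for every $i\le n-p+1$. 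Thus $\hat q_{n,m_n,Q}(\pi)(\omega)=\hat p_{n,Q}(\pi)(\omega)$ \emph{exactly}, and (a) gives the limit. No intermediate quantity $p_{Q^{(m)}(\omega)}(\pi)$ is ever formed, and the diagonal extraction you set up is unnecessary: $m_n$ is chosen directly for each $n$.
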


\begin{proof}
Fix an $\omega\in\Omega$ for which \eqref{firstcon} holds and $n\in\N$.
Since $Q$ separates  $X$ by depth, there exists an $\varepsilon_n$
such that the distance between the depths of each two data-points $X_i(\omega)$,$X_j(\omega)$, $1\leq i< j\leq n+p$ is bigger than $2\varepsilon_n$.
By Proposition \ref{prop:conv_of_depth} there exists an $m_n \in \mathbb{N}$, such that for all $m\geq m_n$, 
$\left|D_{Q^{(m)}}(X_i(\omega))-D_Q(X_i(\omega))\right| < \varepsilon_n$ for all $i\in\{1, \ldots, n+p\}$ and, therefore,
\begin{align*}
\Pi_{Q^{(m)}}(X_{t+1}(\omega), \ldots, X_{t+p}(\omega))= \Pi_{Q}(X_{t+1}(\omega), \ldots, X_{t+p}(\omega)). 
\end{align*}
Hence finally for all $m\geq m_n$ and $i\in\{1, \ldots, n+p\}$
 \begin{align*}
1_{\left\{\Pi_{Q^{(m)}}(X_{t+1}(\omega),X_{t+1}(\omega), \ldots, X_{t+p}(\omega))=\pi\right\}}=  1_{\left\{\Pi_{Q}(X_{t+1}(\omega),X_{t+1}(\omega), \ldots, X_{t+p}(\omega))=\pi\right\}}.
 \end{align*}
 Therefore, $\lim_{n\rightarrow \infty}\hat{q}_{n,m_n, Q}(\pi)=p_Q(\pi)$ by Proposition \ref{prop_1:consistent}.
\end{proof}



If more and more data points of $X$ are observed, they can be arbitrary close to each other. Hence one needs more observations of $Y$ in order to distinguish them by depth. The dependence between $n$ and $m$ is inevitable. Key to the proof of Theorem \ref{thm:consistency(C} is to show that for every point in $\R^d$ the depth converges as the reference measures $Q^{(n)}$ approaches $Q$: 

\begin{proposition}\label{prop:conv_of_depth}
Let  $(Y_j)_{j\in\N}$ be  a  stationary ergodic time series with values in $\mathbb{R}^d$ having  marginal distribution $Q$.
Let $Q^{(n)}$
denote the empirical distribution of $Y_1, \ldots, Y_n$.
Then, it holds that
\begin{align*}
\sup\limits_{x\in \mathbb{R}^d}\left|D_{Q^{(n)}}(x)- D_{Q}(x)\right|\longrightarrow 0 \ a.s.
\end{align*}
\end{proposition}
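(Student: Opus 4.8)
The plan is to establish the uniform convergence in Proposition \ref{prop:conv_of_depth} by combining a pointwise Glivenko--Cantelli-type argument for the empirical measures with an equicontinuity/compactness argument that upgrades pointwise control to uniform control. First I would observe that the halfspace depth at a point $x$ is an infimum over the unit sphere of the $Q$-measure of closed halfspaces through $x$. The natural object to control is therefore the signed discrepancy $\sup_{H}\left|Q^{(n)}(H)-Q(H)\right|$ taken over the class $\mathcal{H}$ of all closed halfspaces in $\mathbb{R}^d$. Since $\mathcal{H}$ is a Vapnik--Chervonenkis class (halfspaces in $\mathbb{R}^d$ have VC dimension $d+1$), a VC-type uniform law of large numbers gives $\sup_{H\in\mathcal{H}}\left|Q^{(n)}(H)-Q(H)\right|\to 0$ almost surely; for a stationary ergodic sequence rather than an i.i.d.\ one, I would invoke an ergodic-theorem version of the Glivenko--Cantelli theorem over VC classes (the relevant statement holds because the bracketing/covering numbers are polynomial and the Birkhoff ergodic theorem supplies the pointwise convergence on a countable approximating subclass, which then propagates to the whole class by monotonicity and the polynomial covering bound).

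Next I would derive the uniform depth convergence from the uniform halfspace discrepancy. For any $x$, writing $H_{x,\phi}=\{z:(z-x)^{\transp}\phi\geq 0\}$, we have
\begin{align*}
\left|D_{Q^{(n)}}(x)-D_Q(x)\right|
=\left|\inf_{\phi}Q^{(n)}(H_{x,\phi})-\inf_{\phi}Q(H_{x,\phi})\right|
\leq \sup_{\phi\in\mathcal{S}^{d-1}}\left|Q^{(n)}(H_{x,\phi})-Q(H_{x,\phi})\right|,
\end{align*}
using the elementary fact that the absolute difference of two infima is bounded by the supremum of the absolute difference of the integrands. The right-hand side is bounded by $\sup_{H\in\mathcal{H}}\left|Q^{(n)}(H)-Q(H)\right|$, which does not depend on $x$. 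Taking the supremum over $x\in\mathbb{R}^d$ on the left and then letting $n\to\infty$ on an almost-sure event gives the claim directly, so in fact no separate equicontinuity argument is needed once the VC bound is in hand.

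The main obstacle I expect is the passage from the i.i.d.\ Glivenko--Cantelli theorem over VC classes to the stationary ergodic setting: one must make sure that a single almost-sure null set works simultaneously for the whole (uncountable) class of halfspaces. The standard route is to reduce to a countable dense subfamily of halfspaces (indexed by rational directions and rational offsets), apply Birkhoff's ergodic theorem to each member of this countable family to get a single good event, and then control the oscillation of $H\mapsto Q(H)$ and $H\mapsto Q^{(n)}(H)$ between nearby halfspaces. For $Q$ the oscillation is handled by a continuity-from-above/below argument (the boundary hyperplanes form a decreasing family whose intersection is a single hyperplane, which may have positive mass, so some care with half-open versus closed halfspaces is warranted); for $Q^{(n)}$ the discrepancy over a small neighbourhood of halfspaces is controlled using the polynomial shattering bound, which keeps the metric entropy from blowing up. An alternative, perhaps cleaner, route is to cite an off-the-shelf uniform ergodic theorem over VC classes (such versions are available in the empirical-process literature for stationary ergodic or $\beta$-mixing sequences) and apply it as a black box; I would present the self-contained reduction but note this shortcut. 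A final minor point to check is that the displayed supremum over all of $\mathbb{R}^d$ is genuinely finite and attained-in-the-limit uniformly, which is immediate from the bound above since the upper bound is already $x$-free.
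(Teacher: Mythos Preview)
Your proposal is correct and follows essentially the same route as the paper: establish a uniform Glivenko--Cantelli statement over the class of closed halfspaces for the stationary ergodic sequence, and then bound the depth discrepancy by the halfspace discrepancy via the elementary inequality $\left|\inf_\phi f(\phi)-\inf_\phi g(\phi)\right|\le \sup_\phi|f(\phi)-g(\phi)|$. The paper obtains the uniform step by first showing pointwise convergence $Q^{(n)}(H_{x,\varphi})\to Q(H_{x,\varphi})$ via Birkhoff plus Portmanteau and then invoking Theorem~1 and Example~11 of \cite{elker1979glivenko}, which assert that closed halfspaces form an ``ideal uniformity class'' so that pointwise convergence upgrades to uniform convergence for ergodic observations; your VC-class formulation is an equivalent packaging of the same mechanism.
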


\begin{proof}
Let $(Y_j)_{j\in\N}$ be a stationary ergodic sequence.
We make use of the fact that ergodicity is invariant under measurable transformations $f$, which is considered \enquote{mathematical folklore}, but, nonetheless, needs to be rigorously proved  for completeness (see Lemma \ref{lem:ergodic}). Particularly, we chose $f:\mathbb{R}^d\longrightarrow\mathbb{R}$, $z\mapsto 1_{\left\{\left(-\infty, y_1\right]\times \cdots \times\left(-\infty, y_d\right]\right\}}(z)$.
It follows from Birkhoff's ergodic theorem that 
 the empirical distribution function corresponding to $Q^{(n)}(\omega)=\frac{1}{n}\sum_{j=1}^n\delta_{Y_j(\omega)}$ converges, i.e.,
\begin{align*}
\frac{1}{n}\sum\limits_{j=1}^n1_{\left\{Y_j\in \left(-\infty, y_1\right]\times \cdots \times  \left(-\infty, y_d\right]\right\}}\longrightarrow Q\left(\left(-\infty, y_1\right]\times \cdots \times \left(-\infty, y_d\right] \right)=F_Q(y) \ a.s.
\end{align*}
for all $y=(y_1, \ldots, y_d)\in \mathbb{R}^d$.
Accordingly,  the distribution $Q^{(n)}$ converges weakly to $Q$ almost surely. 
For  $x \in \mathbb{R}^d$ and $\varphi \in \mathcal{S}^{d-1}$ we define $H_{x, \varphi}:=\left\{z\in \mathbb{R}^d|(z-x)^T\varphi\geq 0\right\}$.
According to the Portmanteau theorem
\begin{align} \label{qnconvergence}
Q^{(n)}\left(H_{x, \varphi}\right)\longrightarrow Q\left(H_{x, \varphi}\right),
\end{align}
 since the boundary of $H_{x, \varphi}$ is a set with measure $0$ due to continuity of $F_Q$.
For a proof, however, it has to be shown that 
    \begin{align}\label{qnconvergence_uni}
 \sup\limits_{x \in \mathbb{R}^d, \varphi \in \mathcal{S}^{d-1}}\left|Q^{(n)}\left(H_{x, \varphi}\right)- Q\left(H_{x, \varphi}\right)\right|\longrightarrow 0 \ \text{a.s.}
    \end{align}
    For this note that according to  Theorem 1 in \cite{elker1979glivenko}
\eqref{qnconvergence} implies \eqref{qnconvergence_uni} for  stationary ergodic random observations $(Y_j)_{j\in \mathbb{N}}$ with values in $\mathbb{R}^d$
if $\{H_{x, \varphi}: x\in \mathbb{R}^d, \varphi \in \mathcal{S}^{d-1}\}$ constitutes a so-called ideal uniformity class.
Moreover, Example 11 in \cite{elker1979glivenko} shows that the closed halfspaces in $\mathbb{R}^d$ constitute such an ideal uniformity class.
\end{proof}



\section{Data analysis: animal movement}\label{sec:data}

For an application of depth patterns we analyzed seal pub movement data provided within the scope of tracking studies considered in \cite{nagel2021movement}.
The corresponding data set contains hourly GPS data from 66 Antarctic fur seal pups tracked from birth until moulting.
The individuals were selected at random from two breeding colonies on Bird
Island, South Georgia: the Special Study Beach (SSB) and Freshwater Beach (FWB), which are separated by around 200m. Due to their close proximity, these colonies experience comparable climatic conditions
and offer comparable habitats. Despite these similarities, the seal pub population density  is higher at SSB  resulting in  different movement dynamics at the two colonies.
A higher density of animals at SSB results in a more static system. At the same time, low density breeding aggregations are more likely to be chased by avian predators, such that pubs  at FWB 
have a higher risk of  
 being killed.

Our analysis focuses on  the seal pub population located at  FWB, a total of 21 individuals. 
Out of these 21 individuals six   died within the period of recorded measurements
and were, therefore, disregarded in the analysis.
Accordingly, we based our analysis on the remaining 15 individuals with ID numbers
 C1, C5, C6, C7, C8, C9, C10, C12, C13, C17, C18, C19, C21, C22, C25.
The deployment duration of these animals ranged from a minimum of 498  hours (approximately 21 days) to a maximum of  1287 hours  (approximately 54 days), while
on average individuals travelled  approximately 50 meters per hour.
\begin{figure}[h] 
\begin{center}
\includegraphics[width=8cm]{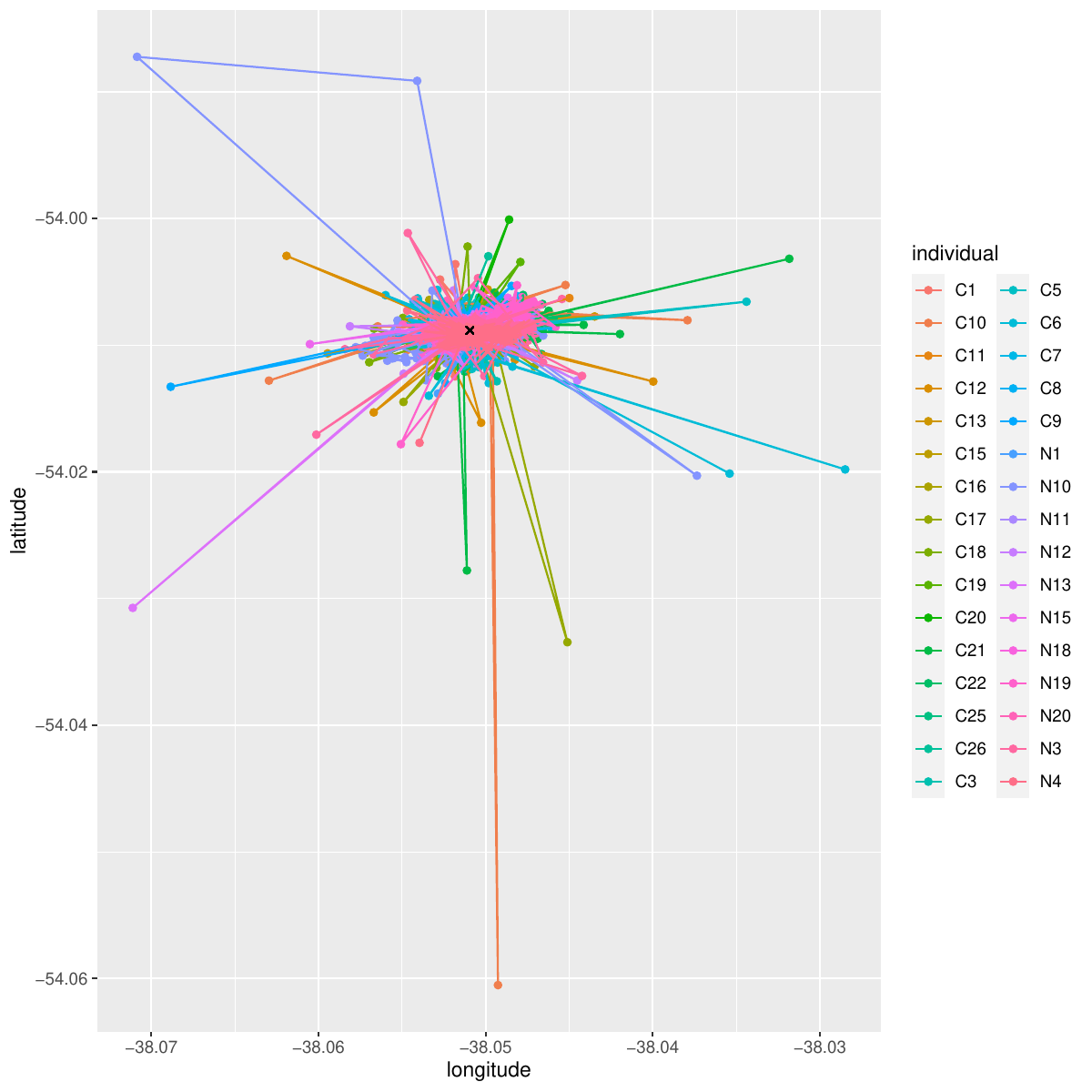}
\caption{Motion paths of 15 individual seal pubs at Freshwater Beach (FWB).}
\label{fig:fwb}
\end{center}
\end{figure}
In general, pup movement showed a star-like pattern characterized by directed exploration within a relatively small convex 
area around a central location; see Figure \ref{fig:fwb}.

\begin{table}[]
    \centering
    \begin{tabular}{ll|cccccc}
    ID   & {$n\backslash\pi$}  & (0, 1, 2) & (0, 2, 1) & (2, 0, 1) & (2, 1, 0) & (1, 2, 0) & (1, 0, 2)\\
    \hline
      C1  &
      1251 & 0.1927126 & 0.1392713 & 0.1489879 & 0.1983806 & 0.1651822 & 0.1554656\\
      C5 & 1164 &  0.1755459 & 0.1589520 & 0.1458515 & 0.2052402 & 0.1510917 & 0.1633188\\
     C6 & 1128 & 0.1931408 & 0.1525271 & 0.1398917 & 0.1958484 & 0.1534296 & 0.1651625\\
    C7 & 1287 & 0.1804571 & 0.1631206 & 0.1623325 & 0.1946414 & 0.1489362 & 0.1505122\\
    C8 & 1244 & 0.1763265 & 0.1510204  & 0.1477551 & 0.1893878 & 0.1665306 & 0.1689796\\
    C9 & 922 & 0.1743929 & 0.1721854 & 0.1534216 & 0.1942605 & 0.1434879 & 0.1622517\\
    C10  & 758 & 0.1878378 & 0.1608108 & 0.1459459 & 0.1689189 & 0.1621622 & 0.1743243\\
    C12 & 1271 & 0.1600318 & 0.1711783 & 0.1536624 & 0.1910828 & 0.1560510 & 0.1679936\\
    C13 & 1097 & 0.1922006 & 0.1504178 & 0.1662024 & 0.1708449 & 0.1680594 & 0.1522748\\
    C17 & 1260 & 0.1771845 & 0.1521036 & 0.1504854 & 0.1820388 & 0.1674757 & 0.1707120\\
    C18 & 1228 & 0.1679074 & 0.1728701 & 0.1720430 & 0.1695616 & 0.1579818 & 0.1596361\\
    C19 & 498 & 0.2108559 & 0.1503132 & 0.1691023 & 0.1774530 & 0.1544885 & 0.1377871\\
    C21 & 1240 & 0.1797386 & 0.1503268 & 0.1748366 & 0.1683007 & 0.1764706 & 0.1503268\\
    C22 & 1260 & 0.1771845 & 0.1521036 & 0.1504854 & 0.1820388 & 0.1674757 & 0.1707120\\ 
    C23 & 908 & 0.1629464 & 0.1808036 & 0.1741071 & 0.1941964 & 0.1417411 & 0.1462054
    \end{tabular}
    \caption{Frequencies of the ordinal patterns of order 3 for the 15 considered seal pubs at Freshwater Beach (FWB). $n$ denotes the number of observations available for each individual, ID their ID numbers. }
    \label{table:op_frequencies_seal_pubs}
\end{table}

Based on the  data, the goal of our analysis is the determination of a suitable model for a characterization of seal pub movement at Freshwater Beach (FWB).
For this, we characterize  the pub movement  in terms of steps taken between successive locations at regular time intervals (in our case time intervals of one hour), while steps are characterized by the distance between successive locations (step lengths) and changes in direction (turning angles) assuming that these are independent.

Due to its generality 
often the gamma distribution (characterized  by a shape parameter $k$ and a scale parameter 
$\theta$) is used to model step lengths of animal movement; see \cite{avgar2016integrated}.
Maximum-Likelihood estimation in \verb$R$ via the command  \verb$egamma$   on the basis of movement data from all 15 considered animals at Freshwater Beach  resulted in the estimates 
$\hat{k}=1$ and $\hat{\theta}=48.44$.
Since a gamma distribution with shape parameter $k=1$ and scale parameter $\theta$ corresponds to an exponential distribution with rate parameter $\lambda=\frac{1}{\theta}$, we  model the step length distribution by an 
exponential distribution with rate parameter $\lambda=\frac{1}{\hat{\theta}}=0.02$.
For an illustration of the realized step lengths see Figure \ref{fig:steps}.
\begin{figure}[h] 
\begin{center} 
\includegraphics[width=8cm]{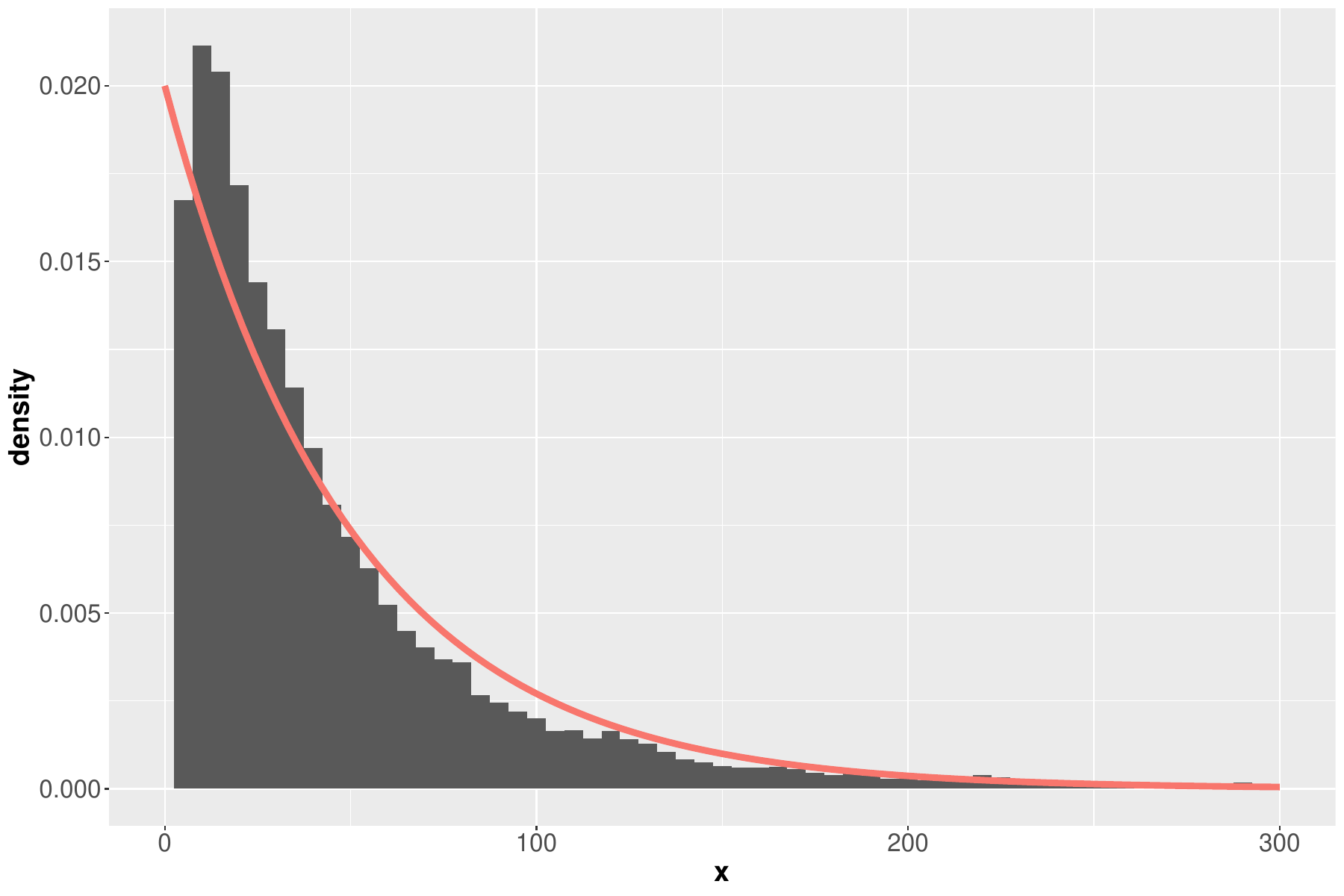}
\caption{Histogram  of step lengths of 15 individual seal pubs at Freshwater Beach (FWB) and density of the exponential distribution with parameter $\lambda=\frac{1}{\hat{\theta}}$, where $\hat{\theta}=48.44$ derives from Maximum-Likelihood estimation based on the step lengths of all 15 individuals.}
\label{fig:steps}
\end{center}
\end{figure}

A two-dimensional random walk model aligning with this modeling approach for step lengths corresponds to turning angles drawn  independently from a uniform distribution on $[0, 2\pi]$.
Accordingly, we choose step lengths generated independently from an exponential distribution with rate parameter $\lambda=0.02$ and turning angles generated independently from a uniform distribution on $[0, 2\pi]$.

The ordinal pattern distribution of a corresponding random walk model is then approximated through simulations by an average of pattern frequencies of 1000 repeated simulations of movement paths, each  consisting of 1000 steps; see Table \ref{table:op_frequencies_rw}.
\begin{table}[]
    \centering
    \begin{tabular}{cccccc} 
(0, 1, 2) & (0, 2, 1) & (2, 0, 1) & (2, 1, 0) & (1, 2, 0) & (1, 0, 2)\\
    \hline
     0.2482 &
 0.126 &
 0.126 &
 0.249 &
 0.126 &
 0.126
     \\
    \end{tabular}
    \caption{Frequencies of the ordinal patterns of order 3 for a random walk  consisting of 1000 steps with steplengths generated independently from an exponential distribution with rate parameter $\lambda=0.02$ and turning angles generated independently from a uniform distribution on $[0, 2\pi]$. The frequencies correspond to an average from 1000 repetitions.}
    \label{table:op_frequencies_rw}
\end{table}
A comparison of the pattern distributions of the individuals at FWB reported in Table \ref{table:op_frequencies_seal_pubs} and the pattern distribution  reported in Table 
\ref{table:op_frequencies_rw} clearly indicates that the seal pub movement at FWB does not follow a random walk.

As an alternative to the simple random walk model, we propose to describe the seal pub movement by a
{\em
biased (anti)persistent random walk}. For this, we assume that driving forces for the seal pubs' movement are a bearing towards a point of interest (the \enquote{center of animal movement}), as well as dependence between the next and previous steps in the movement.
More precisely, we model the expectation of the turning angle $a(t)$ at each step $t$
as a convex combination of the previous turning angle $a(t-1)$  and a bias $b(t)$ 
towards the center $c=(c_1, c_2)$  of  animal movement.
For this, we determine the bearing $b(t)$ from the current position $x(t)=(x_1(t), x_2(t))$ of the animal to the center as the angle
of a right-angled triangle with adjacent side $\Delta x_1(t)=c_1-x_1(t)$
and opposite side $\Delta x_2(t)=c_2-x_2(t)$, i.e. 
$b(t)=\operatorname{atan2}\left(\Delta x_2(t), \Delta x_1(t)\right)$, where $\operatorname{atan2}(y, x)$ is the angle measure (in radians, with 
$-\pi <\theta \leq \pi$) between the positive $x$-axis and the ray from the origin to the point $(x, y)$ in the Cartesian plane.

\begin{figure}[!h]
\begin{center}
\begin{tikzpicture}[remember picture]
\tikzset{->-/.style={decoration={
 markings,
 mark=at position .5 with {\arrow{>}}},postaction={decorate}}}
\coordinate (C) at (0,0); 
\node[below left] at (C) {\scriptsize $(c_1, c_2)$};
\coordinate[circle, fill,inner sep=1.5pt]  (A) at (5,2.5);
\node at (3.6,2.4) {\scriptsize$(x_1(t), x_2(t))$};
\coordinate[circle, fill,inner sep=1.5pt] (B) at (-1,4); 
\node[above right] at (B) {\scriptsize$(x_1(t-1), x_2(t-1))$};
\draw[thick] (C) -- (5, 0) node[midway,below] {\scriptsize $\Delta x_1(t)$};
\draw[thick] (A) -- (5,0) node[midway,right] {\scriptsize $\Delta x_2(t)$};
\draw[dashed] (C) -- (A);
\draw[->,thick] (B) --  (A);
\draw[dashed] (A) -- (11, 1);
\draw[dotted] (A) -- (11, 2.5);
\draw (5,0) -- ++(-0.3,0) -- ++(0,0.3) -- ++(0.3,0);
\draw[->,thick, ggplot_blue]  (A) -- (8.5, 1);
\coordinate[circle, fill, inner sep=1.5pt, ggplot_blue] (D) at (8.5, 1) ; 
\node[below, ggplot_blue] at (8.5, 1) {\scriptsize$(x_1(t+1), x_2(t+1))$};
\draw[thick,->] (9.5, 2.5) arc[start angle=0,end angle=-40,radius=1.5];
\node at (8.5,2) {\scriptsize $a(t-1)$};
\draw[thick,->] (1.8,0) arc[start angle=0,end angle=40,radius=1.2];
\node at (1.2,0.25) {\scriptsize $b(t)$};
\draw[->, thick, ggplot_orange] (A) -- (3.211146, 1.605572);
\draw[thick, ggplot_green] (A) -- (5, 1.405572) node[midway,right] {\tiny $\sin(b(t))$};
\draw[thick, ggplot_green] (5, 1.605572)-- (3.211146, 1.605572)
node[midway,below] {\tiny $\cos(b(t))$};
\draw[thick, ggplot_green] (A) -- (6.940285, 2.5) node[midway, above] {\tiny $\cos(a(t-1))$};
\draw[thick, ggplot_green] (6.940285, 2.5) --  (6.940285, 2.014929)
node[midway,right] {\tiny $\sin(a(t-1))$};
\draw[->, thick, ggplot_orange] (A) -- (6.940285, 2.014929);
\end{tikzpicture}
\caption{Illustration of a step in a biased (anti)persistent random walk.
In blue the step from the current to the next location, i.e. from $(x_1(t), x_2(t))$ to $(x_1(t+1), x_2(t+1))$.
The red arrows indicate the bearing to the center $(c_1, c_2)$ and in direction of the previous turning angel $a(t-1)$ standardized to a steplength of 1. In green the corresponding steps  taken in direction of $x$- and $y$-axis. }
\end{center}
\end{figure}

The turning angle $a(t)$ is then 
 chosen from a von Mises distribution with location parameter $\mu$ which is calculated as a `weighted mean' of the 
 turning angle and the bearing to the center. The concentration parameter $\kappa$ is assumed to be constant over time. 
 In fact,  the von Mises distribution is often the model of choice when describing  (anti)persistent random walks; see
\cite{wu2000modelling}.
For an illustration of the von Mises distribution with different values of the parameter $\kappa$ see Figure \ref{fig:vonmises}.
\begin{figure}[h] 
\begin{center}
\includegraphics[width=8cm]{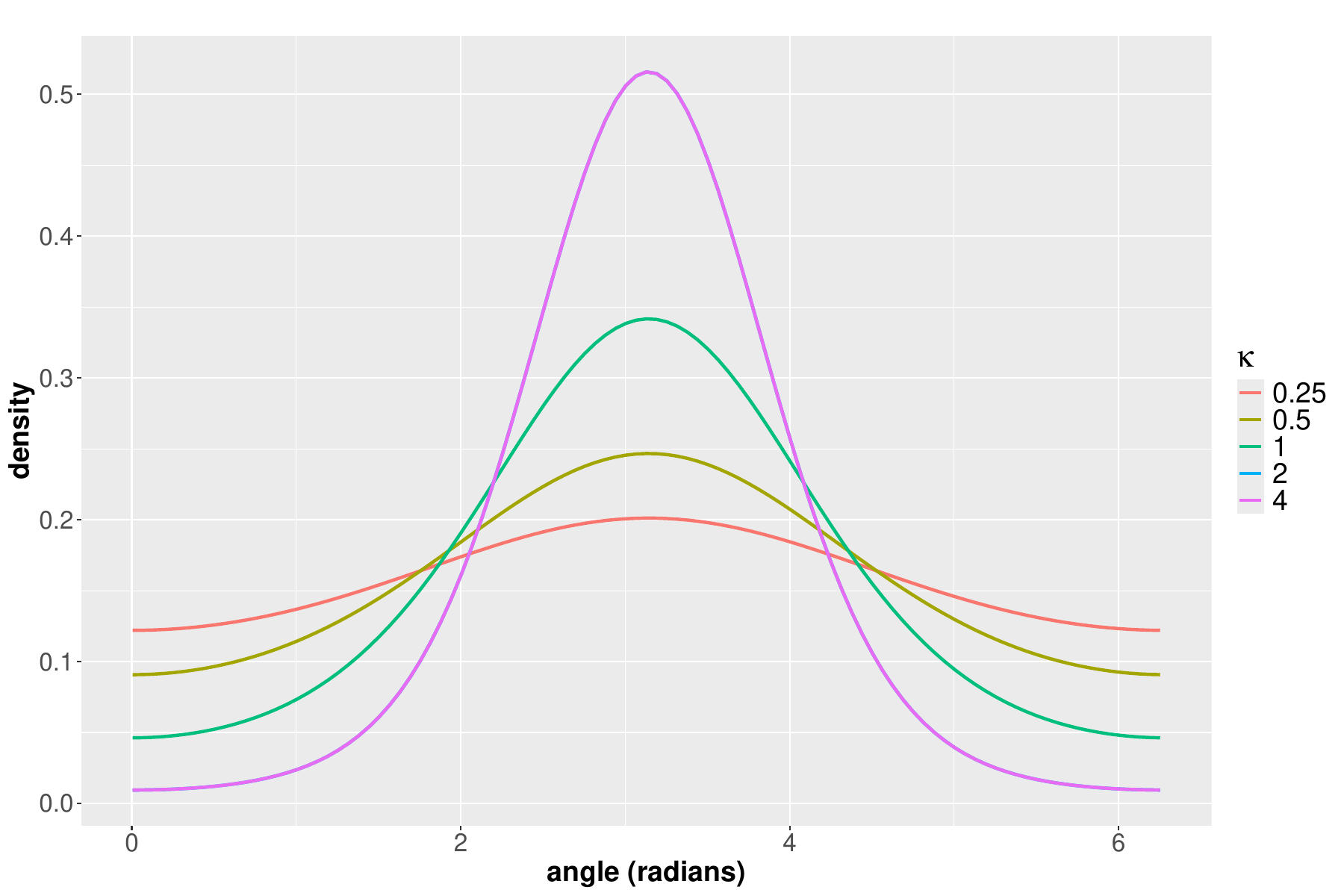}
\caption{Density functions of the von Mises distribution for different values of the concentration parameter $\kappa$.}
\label{fig:vonmises}
\end{center}
\end{figure}
For the specific situation, we choose $\mu= \operatorname{atan2}\left(\Delta_2(t)/\Delta_1(t)\right)$, where 
\begin{align*}
    &\Delta_1(t) =r(1-\beta) \cos(a(t-1))+\beta \cos(b(t))\\
     &\Delta_2(t) =r(1-\beta) \sin(a(t-1))+\beta \sin(b(t)),
\end{align*}
for some $\beta\in [0, 1]$ and $r\in\{-1, 1\}$.
 The choice $r=1$ corresponds to persistent movement, i.e. to a turning angle that does not deviate much from the previous turning angle,
while  $r=-1$  corresponds to antipersistent movement, i.e. to a turning angle that is expected to change by approximately $180^{\circ}$.

\vspace{10mm}
\includegraphics[width=\textwidth]{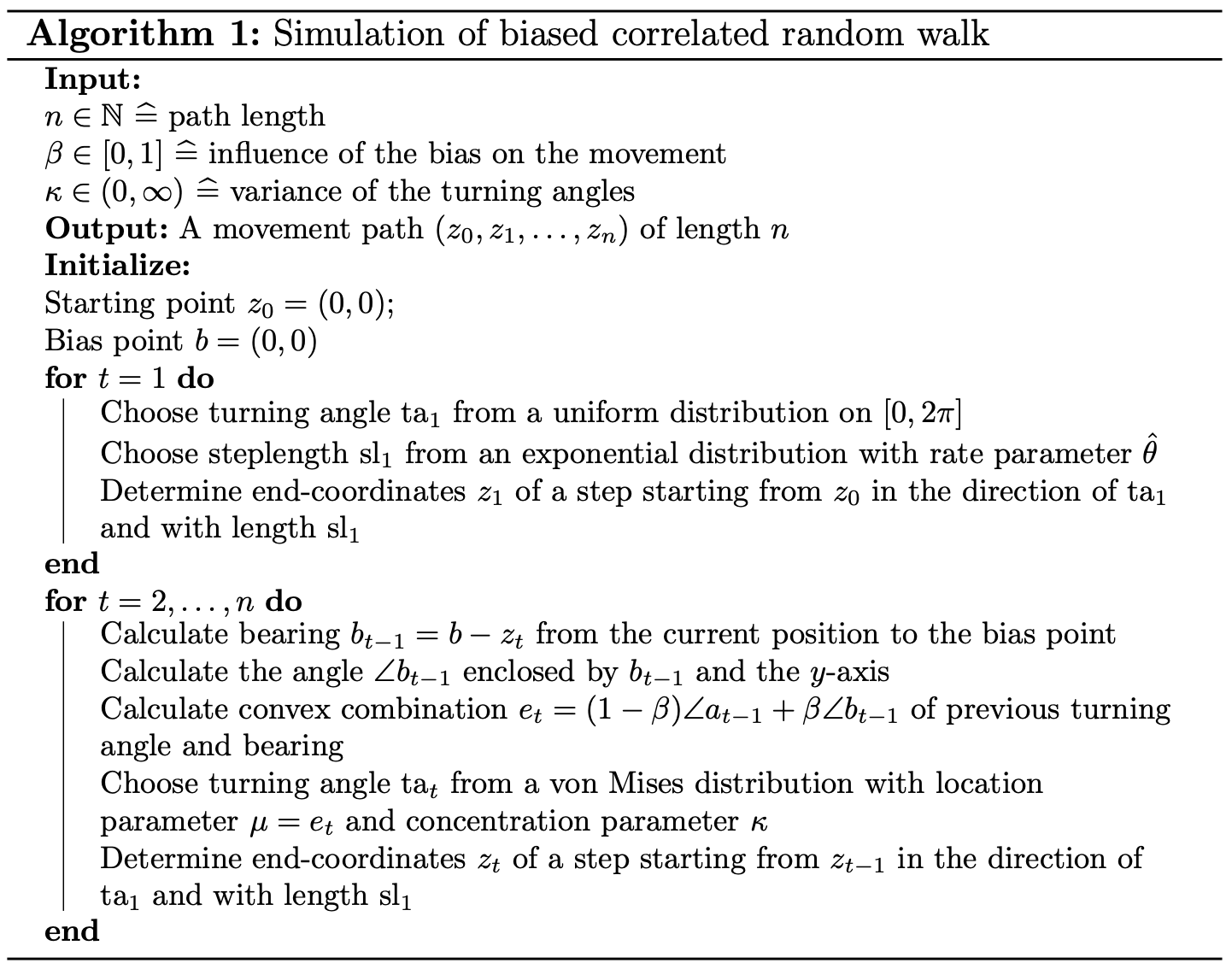}


For each of the parameter combinations in the tables below we simulated 1000 movement paths.
For each path we have computed the frequencies of the individual ordinal pattern and then averaged the pattern frequencies over all 1000 repetitions, i.e.,
for each parameter combination, we have obtained
six pattern frequencies $q=(q_1, \ldots, q_6)$.
Moreover, for each of the 15 animals considered 
let $p^i=(p_1^i, \ldots, p_6^i)$, $i=1, \ldots, 15$, denote the pattern frequencies of the i-th animal.
In the tables below we report the sum of Euclidean distances $\sum_{i=1}^{15}\|p^i-q\|_2$.

\begin{table}[h]
\centering
\begin{tabular}{ll|cccccc}
\\
 &$\kappa \backslash\beta$  & 0 & 0.2 & 0.4 & 0.6 & 0.8 & 1 \\ 
 \cline{2-8} 
&$ 0.25$  &  2.276 & 2.281 & 2.22 & 1.945 & 1.822 & 1.759 \\ 
&$ 0.50$  &  2.768 & 2.761 & 2.648 & 2.110 & 1.929 & 1.850 \\ 
\smash{\rotatebox[origin=c]{90}{\text{$r=1$}}}  &$ 1$  &  3.594 & 3.627 & 3.371 & 2.454 & 2.287 & 2.200 \\
&$ 2$  &  4.592 & 4.744 & 4.140 & 2.885 & 2.859 & 2.804 \\ 
&$ 4$  &  5.097 & 5.526 & 4.565 & 3.150 & 3.222 & 3.212 \\ 
 \cline{2-8} 
&$0.25$  &  1.281 & 1.296 & 1.332 & 1.609 & 1.701 & 1.768 \\ 
&$0.50$  &  0.829 & 0.837 & 0.945 & 1.541 & 1.754 & 1.849 \\ 
\smash{\rotatebox[origin=c]{90}{\text{$r=-1$}}}&$1$  &  0.409 & 0.436 & 0.647 & 1.757 & 2.101 & 2.211 \\ 
&$2$  &  1.186 & 1.246 & 1.246 & 2.328 & 2.704 & 2.800 \\ 
&$4$  &  1.917 & 1.968 & 1.763 & 2.705 & 3.130 & 3.203 \\ 
\end{tabular}
\caption{
Summarized squared differences of the individuals'
ordinal pattern distributions from simulated pattern distributions generated according to the parameters $r, \kappa$, and $\beta$.
}
\label{table:function_values}
\end{table}

Most notably, for $r=1$ the values reported in Table \ref{table:function_values} are significantly higher than those for $r=-1$. We conclude that seal pub movement can be better described by biased antipersistent random walks than by  biased  persistent random walks.
The smallest values in  Table \ref{table:function_values} correspond to small values of $\beta=0$ and $\rho=1$.
This suggests that for  seal pub movement the
dependence between turning angles has a stronger influence on the animals' movement pattern than the 
bearing towards the center.

\section{Discussion}
\label{sec:conc}

 The present article establishes the concept of depth patterns in order to analyze movement in the plane on an ordinal scale. Due to a lack of a natural ordering in $\mathbb{R}^2$, we  consider  Tukey's halfspace depth as a tool to determine a center-outward ordering  of  datapoints. This allows us to keep track of  movement towards or away from the center of a certain region. 
Depending on the relationship between the underlying time series and the reference measure we  consider four different settings and  prove limit theorems within each setting. 
Whenever using ordinal approaches, this has several benefits but also drawbacks. On the one hand, our method is stable under small perturbations or measurement errors. On the other hand, due to basing our analysis on ordinal information only, we lose  metric information. Nonetheless, the results can be interpreted in a natural way and our approach is flexible and easy to implement.  
We  emphasize the applicability of our approach by analyzing the movement of seal pups. One can think of various other areas of applications like, for example,  rainfall radar data. 
 \cite{neu_coh_tam_21} suggest to use ordinal patterns for prediction in natural environments. They  proclaim that these patterns could explain, how `a myopic organism can make short-term predictions given limited data availability and cognitive resources'. Although the idea seems interesting and may be useful for future research, the 
considerations of  \cite{neu_coh_tam_21} in this direction stay vague.
In fact, the practical 
 example considered in that article does not seem to fit into the envisioned framework as it aims at short-term prediction of  FinTech data, i.e., BitCoin prices. 
Complementing the vision of \cite{neu_coh_tam_21} we conjecture  that  natural environments like myopic organisms are predestined for  depth pattern-based analyses (which, naturally, had not  been available at the time  \cite{neu_coh_tam_21} was published). Modeling chemotaxis-driven bacteria movement can be considered as a natural example in this regard. 
 With the present article, we  close a corresponding gap and our method might be useful for modeling bacteria movement where the individuals are using chemotaxis.

\appendix

\section{Ergodicity}

\begin{definition}
    Let $(\Omega, \mathcal{F}, P, T)$ be a measure preserving dynamical system.
    A measurable set $A\subset\Omega$ is called $T$-invariant if $T^{-1}A=A$.
    Then
    \begin{align*}
\mathcal{I}:=\left\{A:T^{-1}(A)=A\right\}        
    \end{align*}
    denotes the $\sigma$-field of invariant sets.
    $T$ is called ergodic if $\mathcal{I}$ is trivial, i.e., $P(A)\in \{0, 1\}$ for all $A\in \mathcal{I}$.
    A stationary time series $Z=(Z_t)_{t\in \mathbb{N}}$ with values in $S$ is called ergodic if the shift on 
    $(S^{\mathbb{N}}, \mathcal{S}^{\mathbb{N}}, P_Z)$ is ergodic.
\end{definition}

\begin{lemma}\label{lem:ergodic}
Suppose that $(X_j)_{j\in \N}$ is a stationary ergodic time series with state space $S$ and $f:S^{\mathbb{N}}\longrightarrow \mathbb{R}$ a measurable function. 
Then $(Y_j)_{j\in \N}$ with $Y_i=f(X_i, X_{i+1}, \ldots)$ is stationary ergodic as well. 
\end{lemma}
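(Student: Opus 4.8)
The plan is to exhibit $(Y_j)_{j\in\N}$ as the image of $(X_j)_{j\in\N}$ under a shift-equivariant measurable map and then transport stationarity and ergodicity along that map; this is the classical fact that a factor of an ergodic system is ergodic. Regard $\mathbf{X}=(X_1,X_2,\ldots)$ as a random element of $(S^{\N},\mathcal{S}^{\N})$ with law $P_X$, and let $T$ be the one-sided shift on $S^{\N}$, $(T\mathbf{x})_j=x_{j+1}$; by hypothesis $T$ preserves $P_X$ and is ergodic. Define $\Phi\colon S^{\N}\to\R^{\N}$ by
\[
\Phi(\mathbf{x})=\bigl(f(\mathbf{x}),\,f(T\mathbf{x}),\,f(T^{2}\mathbf{x}),\ldots\bigr),
\]
so that $\Phi(\mathbf{X})=(Y_1,Y_2,\ldots)=:\mathbf{Y}$, with law $P_Y:=P_X\circ\Phi^{-1}$. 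The algebraic engine of the proof is the intertwining identity $\sigma\circ\Phi=\Phi\circ T$, where $\sigma$ denotes the shift on $\R^{\N}$; indeed $(\sigma\Phi(\mathbf{x}))_j=f(T^{j}\mathbf{x})=(\Phi(T\mathbf{x}))_j$ for every $j$.

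First I would verify that $\Phi$ is measurable from $(S^{\N},\mathcal{S}^{\N})$ to $(\R^{\N},\mathcal{B}(\R)^{\N})$: each coordinate map $\mathbf{x}\mapsto f(T^{j-1}\mathbf{x})$ is measurable, being a composition of the measurable shift $T^{j-1}$ with the measurable function $f$, and a map into a product space is measurable as soon as all of its coordinates are. Granting measurability, stationarity of $\mathbf{Y}$ is immediate:
\[
P_Y\circ\sigma^{-1}=P_X\circ(\sigma\circ\Phi)^{-1}=P_X\circ(\Phi\circ T)^{-1}=P_X\circ T^{-1}\circ\Phi^{-1}=P_X\circ\Phi^{-1}=P_Y,
\]
using the $T$-invariance of $P_X$.

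For ergodicity, take a measurable $\sigma$-invariant set $A\subset\R^{\N}$, i.e. $\sigma^{-1}(A)=A$. Then $\Phi^{-1}(A)$ is $T$-invariant, since $T^{-1}(\Phi^{-1}(A))=\Phi^{-1}(\sigma^{-1}(A))=\Phi^{-1}(A)$ by the intertwining identity; hence $\Phi^{-1}(A)$ lies in the invariant $\sigma$-field $\mathcal{I}$ of $T$, so $P_X(\Phi^{-1}(A))\in\{0,1\}$ by ergodicity of $\mathbf{X}$. Therefore $P_Y(A)=P_X(\Phi^{-1}(A))\in\{0,1\}$, which says precisely that $\sigma$ acts ergodically on $(\R^{\N},\mathcal{B}(\R)^{\N},P_Y)$, i.e. that $(Y_j)_{j\in\N}$ is stationary and ergodic.

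I do not expect a genuine obstacle here: the content is the standard statement that an image (factor) of a stationary ergodic process under a measurable, shift-commuting map is again stationary and ergodic. The only points needing a little care are the measurability of $\Phi$ (and keeping track that the ambient $\sigma$-field for an $\R$-valued sequence is the product Borel $\sigma$-field) and consistently using the preimage formulation $\sigma^{-1}(A)=A$ of invariance, in line with the definition of the invariant $\sigma$-field $\mathcal{I}$ adopted above.
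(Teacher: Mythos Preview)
Your proof is correct and follows essentially the same route as the paper's: both define the map $\Phi(\mathbf{x})=(f(\mathbf{x}),f(T\mathbf{x}),\ldots)$ (the paper's set $B$ is precisely your $\Phi^{-1}(A)$), check that the preimage of a shift-invariant set is shift-invariant, and conclude via ergodicity of $P_X$. Your presentation is slightly more explicit, spelling out the intertwining identity $\sigma\circ\Phi=\Phi\circ T$ and the measurability of $\Phi$, whereas the paper verifies the invariance of $B$ by direct element-chasing and cites stationarity as well known; but the content is the same.
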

\begin{proof}
It is a well-known fact that measurable transformations preserve strict stationarity.
Therefore, it suffices to prove ergodicity of $(Y_j)_{j\in \mathbb{N}}$.
For this, let $T$ denote the shift operator and let $A\in \mathcal{I}_Y$, i.e.,
\begin{align*}
    y=(y_0, y_1, \ldots )\in A\Leftrightarrow (y_1, y_2, \ldots)\in A.
\end{align*}
Define
\begin{align*}
B:=\{x:(f(x_0, x_1, \ldots), f(x_1, x_2, \ldots), \ldots )\in A\}.
\end{align*}
Then, it holds that 
$P_X(B)=P(X\in B)=P(Y\in A)=P_Y(A)$.
Moreover, $B$ is shift invariant as well since
\begin{align*}
    x\in B   &\Leftrightarrow (f(x_0, x_1, \ldots), f(x_1, x_2, \ldots), \ldots )\in A\\
    &\Leftrightarrow (f(x_1, x_2, \ldots), f(x_2, x_3, \ldots), \ldots )\in A\\
        &\Leftrightarrow (x_1, x_2, \ldots)=T(x
        )\in B.
\end{align*}
As a result, since $P_X(B) \in \{0, 1\}$ for all $B\in \mathcal{I}_X$, $f$ is ergodic.
\end{proof}






\section{A geometric proof of Theorem \ref{thm:consistency(C}}

This section presents an alternative approach for a proof of  Theorem \ref{thm:consistency(C}, which does not use the deep results of \cite{elker1979glivenko}, as supplementary material. Instead, we are using solely elementary geometry to present a proof in dimension $d=2$. 

For a proof of Theorem \ref{thm:consistency(C} it is enough to have the following pointwise version of Proposition \ref{prop:conv_of_depth}. 

\begin{proposition}\label{prop:conv_of_depth_point}
Let $x\in \mathbb{R}^2$  and let  $(Y_j)_{j\in\N}$ be  a  stationary ergodic time series with values in $\mathbb{R}^2$ having marginal distribution  $Q$.
Let $Q^{(n)}$
denote the empirical distribution of $Y_1, \ldots, Y_n$.
Then, it holds that
\begin{align*}
D_{Q^{(n)}}(x)\longrightarrow D_{Q}(x) \ a.s.
\end{align*}
\end{proposition}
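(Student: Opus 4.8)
The plan is to reduce the statement to convergence of a supremum over \emph{open} halfplanes through $x$ and then to control the two one-sided limits separately. First I would observe that the complement of a closed halfplane $\{z:(z-x)^{T}\phi\ge 0\}$ from Definition~\ref{def:tukey} is the open halfplane $H^{o}_{x,\phi}:=\{z\in\mathbb{R}^{2}:(z-x)^{T}\phi>0\}$, so that $D_{\mu}(x)=1-\sup_{\phi\in\mathcal{S}^{1}}\mu(H^{o}_{x,\phi})$ for every probability measure $\mu$; hence it suffices to prove $\sup_{\phi}Q^{(n)}(H^{o}_{x,\phi})\to\sup_{\phi}Q(H^{o}_{x,\phi})$ a.s. Parametrising $\phi$ by the compact circle and using the mild condition---satisfied in particular by the absolutely continuous (e.g.\ elliptical) reference distributions relevant for Theorem~\ref{thm:consistency(C}, cf.\ Lemma~\ref{lem:unique}---that $Q$ assigns zero mass to every line through $x$, the map $\phi\mapsto Q(H^{o}_{x,\phi})$ is continuous, so the supremum for $Q$ is attained at some $\phi^{*}$.

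For the inequality $\liminf_{n}\sup_{\phi}Q^{(n)}(H^{o}_{x,\phi})\ge\sup_{\phi}Q(H^{o}_{x,\phi})$ (equivalently $\limsup_{n}D_{Q^{(n)}}(x)\le D_{Q}(x)$) I would simply apply Birkhoff's ergodic theorem to the fixed bounded measurable function $y\mapsto 1_{\{(y-x)^{T}\phi^{*}>0\}}$, which gives $Q^{(n)}(H^{o}_{x,\phi^{*}})\to Q(H^{o}_{x,\phi^{*}})=\sup_{\phi}Q(H^{o}_{x,\phi})$ a.s.

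The hard part will be the converse inequality $\limsup_{n}\sup_{\phi}Q^{(n)}(H^{o}_{x,\phi})\le\sup_{\phi}Q(H^{o}_{x,\phi})$, and this is where elementary planar geometry enters. Fix $\delta>0$ and cover $\mathcal{S}^{1}$ by finitely many closed arcs $J_{1},\dots,J_{N}$ of angular length $<\delta$ with midpoints $\phi_{i}$. The geometric observation is that, as $\phi$ ranges over a single arc $J_{i}$, every $H^{o}_{x,\phi}$ lies inside the open region $V_{i}:=\bigcup_{\psi\in J_{i}}H^{o}_{x,\psi}$, which is bounded by two rays from $x$, does not contain $x$, and satisfies $V_{i}\subseteq H^{o}_{x,\phi_{i}}\cup C_{i}^{+}\cup C_{i}^{-}$ for two cones $C_{i}^{\pm}$ with apex $x$ and opening $<\delta$. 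Hence $\sup_{\phi}Q^{(n)}(H^{o}_{x,\phi})=\max_{i}\sup_{\phi\in J_{i}}Q^{(n)}(H^{o}_{x,\phi})\le\max_{i}Q^{(n)}(V_{i})$, and since the $V_{i}$ are finitely many fixed Borel sets Birkhoff gives $\max_{i}Q^{(n)}(V_{i})\to\max_{i}Q(V_{i})$ a.s.; finally $Q(V_{i})\le Q(H^{o}_{x,\phi_{i}})+2\varepsilon(\delta)\le\sup_{\phi}Q(H^{o}_{x,\phi})+2\varepsilon(\delta)$, where $\varepsilon(\delta)$ denotes the maximal $Q$-mass of a punctured cone with apex $x$ and opening $\le\delta$. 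Since such punctured cones shrink to punctured rays through $x$, which are $Q$-null by the standing assumption, and since a compactness argument over the (circle-valued) axis direction of the cone upgrades this to uniformity, one gets $\varepsilon(\delta)\to 0$ as $\delta\downarrow 0$; letting $\delta$ run along a sequence $\delta_{m}\downarrow 0$ and intersecting the countably many resulting a.s.\ events finishes this direction.

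Combining the two bounds yields $D_{Q^{(n)}}(x)\to D_{Q}(x)$ a.s.\ (after one further countable intersection of almost sure events). The only real obstacle is the converse bound above: because $\phi\mapsto Q^{(n)}(H^{o}_{x,\phi})$ is merely piecewise constant, with jumps at the finitely many directions orthogonal to some $Y_{j}-x$, pointwise ergodic convergence along a fixed countable set of directions does not by itself preclude the empirical supremum from overshooting in a gap between data directions; the finite-arc covering together with the vanishing-cone estimate is precisely what supplies the uniformity in $\phi$ that the general uniform Glivenko--Cantelli theorem of \cite{elker1979glivenko} provided in the proof of Proposition~\ref{prop:conv_of_depth}.
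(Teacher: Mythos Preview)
Your argument is correct and takes a genuinely different route from the paper. One trivial slip: the complement of $\{z:(z-x)^{T}\phi\ge 0\}$ is $\{z:(z-x)^{T}\phi<0\}=H^{o}_{x,-\phi}$, not $H^{o}_{x,\phi}$; the identity $D_{\mu}(x)=1-\sup_{\phi}\mu(H^{o}_{x,\phi})$ is nevertheless correct because $\phi\mapsto-\phi$ is a bijection of $\mathcal{S}^{1}$.

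The paper's geometric proof does not split into a ``liminf'' and a ``limsup'' bound. Instead it writes each closed halfplane through $x$ as a difference of two circular sectors $A_{t_{\varphi}+\pi/2}\setminus A_{t_{\varphi}-\pi/2}$ (plus a half-line), where $A_{t}=\{z:\text{angle}(z-x)\in[0,t]\}$. This turns the problem into uniform convergence of the one-parameter family $t\mapsto Q^{(n)}(x+A_{t})$, which is a distribution function on $[0,2\pi)$; Birkhoff gives pointwise convergence, the limit $t\mapsto Q(x+A_{t})$ is continuous under the same zero-mass-on-lines assumption you use, and Polya's theorem upgrades this to uniform convergence in $t$, hence in $\varphi$. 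Your approach replaces the sector decomposition and Polya by a finite-arc covering of $\mathcal{S}^{1}$ together with the vanishing-cone estimate $\varepsilon(\delta)\to 0$, which is really the uniform continuity of the angular distribution of $Q$ around $x$. Both arguments rest on the same angular pushforward measure being atomless; the paper's version yields the stronger conclusion $\sup_{\phi}|Q^{(n)}(H_{x,\phi})-Q(H_{x,\phi})|\to 0$ in one stroke but needs a somewhat tedious case analysis (Lemma~\ref{lemma:parametrization}) to justify the sector identity, whereas your covering argument is more symmetric and avoids that case split at the cost of handling the two inequalities separately.
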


\begin{proof}
Let $(Y_j)_{j\in\N}$ be a stationary ergodic sequence.
We make use of the fact that ergodicity is invariant under measurable transformations $f$, (see Lemma 3). Particularly, we chose $f:\mathbb{R}^2\longrightarrow\mathbb{R}$, $z\mapsto 1_{\left\{\left(-\infty, y_1\right]\times \left(-\infty, y_2\right]\right\}}(z)$.
It follows from Birkhoff's ergodic theorem that 
 the empirical distribution function corresponding to $Q^{(n)}(\omega)=\frac{1}{n}\sum_{j=1}^n\delta_{Y_j(\omega)}$ converges, i.e.
\begin{align*}
\frac{1}{n}\sum\limits_{j=1}^n1_{\left\{Y_j\in \left(-\infty, y_1\right]\times \left(-\infty, y_2\right]\right\}}\longrightarrow Q\left(\left(-\infty, y_1\right]\times \left(-\infty, y_2\right] \right)=F_Q(y) \ a.s.
\end{align*}
for all $y=(y_1, y_2)\in \mathbb{R}^2$.
Accordingly,  the distribution $Q^{(n)}$ converges weakly to $Q$ almost surely. 
For fixed $x \in \mathbb{R}^2$  we define $a_n(\varphi):=Q^{(n)}\left(\left\{z\in \mathbb{R}^2|(z-x)^T\varphi\geq 0\right\}\right)$ and $a(\varphi):=Q\left(\left\{z\in \mathbb{R}^2|(z-x)^T\varphi\geq 0\right\}\right)$. 
According to the Portmanteau theorem, for fixed $\varphi$
\begin{align*} 
Q^{(n)}\left(\left\{z\in \mathbb{R}^2|(z-x)^T\varphi\geq 0\right\}\right)\longrightarrow Q\left(\left\{z\in \mathbb{R}^2|(z-x)^T\varphi\geq 0\right\}\right),
\end{align*}
that is, 
 $a_n$ converges \emph{pointwise} in $\varphi$, since the boundaries of the sets under consideration are  sets with measure $0$ due to continuity of $F_Q$.

The idea is now to separate the half planes $\left\{z\in \mathbb{R}^2|(z-x)^T\varphi\geq 0\right\}$ into finitely many subsets and to show  that the convergence on these is \emph{uniform} in $\varphi$.  
More explicitly, we write the half planes as  differences of two circular sectors plus a half line  including the initial point. 
We show uniform convergence for the values of $\varphi$ being contained in the left half plane of $\R^2$. The right half plane can be considered in the same (technical) way. 

We consider a parametrization of $\mathbb{R}^2$ in polar coordinates through
the bijection $W:\mathbb{R}^2\setminus\left\{(0, 0)\right\}\longrightarrow
(0, \infty)\times \left[0, 2\pi\right)$ defined by
\begin{align*}
(z_1, z_2) \mapsto \left(\sqrt{z_1^2+z_2^2}
\begin{cases}
\arctan\left(\frac{z_2}{z_1}\right) \ &\text{if $z_1>0$, $z_2\geq 0$}\\
\frac{\pi}{2} \ &\text{if $z_1=0$, $z_2> 0$}\\
\arctan\left(\frac{z_2}{z_1}\right) +\pi \ &\text{if $z_1<0$}\\
\frac{3\pi}{2} \ &\text{if $z_1=0$, $z_2< 0$}\\
\arctan\left(\frac{z_2}{z_1}\right) +2\pi \ &\text{if $z_1>0$, $z_2< 0$}
\end{cases}
\right).
\end{align*}
Given that $T_{-x}$ denotes a translation with $-x$, i.e. $T_{-x}(z)=z-x$, and $\pi_2:(0, \infty)\times [0, 2\pi)\longrightarrow [0, 2\pi)$ the projection on the second coordinate, that is, $\pi_2(x, y)=y$, we denote
$\Pi_x=\pi_2\circ W\circ T_{-x}:\mathbb{R}^2\setminus\{x\}\longrightarrow \left[0, 2\pi\right)$.

Consider $\varphi=(\varphi_1, \varphi_2)$. Depending on the value of $\varphi_1$, we consider the three cases $\varphi_1<0$, $\varphi_1>0$, and $\varphi_1=0$ separately.

Let $\varphi_1<0$ (case 1).
Note that for the halfspace we are interested in, it holds that
\begin{align*}
\left\{z\in \mathbb{R}^2|(z-x)^T\varphi\geq 0\right\}
=x+\left\{z\in \mathbb{R}^2\left|\right. z^{\top}\varphi\geq 0\right\}.
\end{align*}
Moreover, 
it holds that for some correspondingly chosen $t_{\varphi}$
\begin{align*}
&\left\{z\in \mathbb{R}^2\left|\right. z^{\top}\varphi\geq 0\right\}
=A_{t_{\varphi}+\frac{\pi}{2}}\setminus A_{t_{\varphi}-\frac{\pi}{2}}\cup \left\{z\in \mathbb{R}^2\left|\right. z_1\geq 0,z_2\geq 0,  z^{\top}\varphi= 0\right\},
\end{align*}
where for $t\in [0, 2\pi)$ the subset $A_t$ 
 of $\mathbb{R}^2$ is defined by 
\begin{align*}
A_{t}:=W^{-1}\left(\pi_2^{-1}\left[0, t\right]\right);
\end{align*} 
see Figure \ref{figure1} for a visualization and Lemma \ref{lemma:parametrization} below for a rigorous proof.
\begin{figure}[h] 
\begin{center}
  \begin{tikzpicture}[scale=3,cap=round,>=latex, thick, draw opacity=1]
                             \fill[magenta!10] plot coordinates {(-1.664101cm, -1.1094cm)(-1.664101cm, 1.664101cm)(1.664101cm, 1.664101cm)(1.664101cm, 1.1094cm)};
\fill[pattern color = violet!20, pattern=north west lines] plot coordinates {(1.664101cm, 0cm)(0,0)(-1.109401cm,  1.664101cm)(1.664101cm, 1.664101cm)};   
        \draw[->] (-1.664101cm,0cm) -- (1.664101cm,0cm) node[right] {$x$};
        \draw[->] (0cm,-1.664101cm) -- (0cm,1.664101cm) node[above] {$y$};
  \draw[-] (0cm,0cm) -- (-1.109401cm, 1.664101cm);
    \draw[violet, thick, ->] (0cm,0cm) -- (-0.5547002cm, 0.8320503cm);
        \draw[magenta, thick, -] (0cm,0cm) -- (1.664101cm, 1.1094cm);
                \draw[magenta, thick, -] (0cm,0cm) -- (-1.664101cm, -1.1094cm);       
   \draw[-] (0cm,0cm) -- (1.109401cm, -1.664101cm);
    \node  at (-0.2,0.5) {\textcolor{violet}{$\varphi$}};
        \node  at (0.6,1.2) {\textcolor{violet}{$A_{t_{\varphi}}$}};
                \node  at (-1,1) {\textcolor{magenta}{$\left\{z\in \mathbb{R}^2\left|\right. z^{\top}\varphi\geq 0\right\}$}};
        \draw[thick] (0cm,0cm) circle(1cm);
        \foreach \x/\xtext in {
            90/\frac{\pi}{2},
            180/\pi,
            270/\frac{3\pi}{2},
            360/2\pi}
                \draw (\x:0.85cm)  node[fill=white] {$\xtext$};
        \draw (-1.25cm,0cm) node[above=1pt] {$(-1,0)$}
              (1.25cm,0cm)  node[above=1pt] {$(1,0)$}
              (0cm,-1.25cm) node[fill=white] {$(0,-1)$}
              (0cm,1.25cm)  node[fill=white] {$(0,1)$};
                        \end{tikzpicture}

\caption{The half space $\left\{z\in \mathbb{R}^2\left|\right. z^{\top}\varphi\geq 0\right\}$ for $\varphi$ and the corresponding  circular sector $A_{t_{\varphi}}$. It follows that $\left\{z\in \mathbb{R}^2\left|\right. z^{\top}\varphi\geq 0\right\}
=A_{t_{\varphi}+\frac{\pi}{2}}\setminus A_{t_{\varphi}-\frac{\pi}{2}}\cup \left\{z\in \mathbb{R}^2\left|\right. z_1\geq 0,z_2\geq 0,  z^{\top}\varphi= 0\right\}$.}
\label{figure1}
\end{center}
\end{figure}

It then follows that
\begin{align*}
&\left\{z\in \mathbb{R}^2\left|\right. (z-x)^{\top}\varphi\geq 0\right\}\\
=&(\{x+A_{t_{\varphi}+\frac{\pi}{2}}\}\setminus \{x+A_{t_{\varphi}-\frac{\pi}{2}}\})   \cup \left\{z\in \mathbb{R}^2\left|\right. z_1-x_1\geq 0,z_2-x_2\geq 0,  z^{\top}\varphi= 0\right\},
\end{align*}
where the two sets are disjoint and $\{x+A_{t_{\varphi}-\frac{\pi}{2}}\}\subset \{x+A_{t_{\varphi}+\frac{\pi}{2}}\}$.
As a result, we have
\begin{align} \label{4terms}
&Q^{(n)}\left(\left\{z\in \mathbb{R}^2\left|\right. (z-x)^{\top}\varphi\geq 0\right\}\right)\\ \nonumber
= &Q^{(n)}\left(
\{x+A_{t_{\varphi}+\frac{\pi}{2}}\}\setminus \{x+A_{t_{\varphi}-\frac{\pi}{2}}\} \right)\\  \nonumber \hspace{5mm}
&+Q^{(n)}\left(
\left\{z\in \mathbb{R}^2\setminus\{x\}\left|\right. z_1-x_1\geq 0,z_2-x_2 \geq 0,  (z-x)^{\top}\varphi = 0\right\}\right) \\ \nonumber
=&Q^{(n)}\left(x+A_{t_{\varphi}+\frac{\pi}{2}}\right)-Q^{(n)}\left(x+A_{t_{\varphi}-\frac{\pi}{2}}\right) \\  \nonumber\hspace{5mm}
&+Q^{(n)}\left(
\left\{z\in \mathbb{R}^2\left|\right. z_1-x_1\geq 0,z_2-x_2\geq 0,  (z-x)^{\top}\varphi = 0\right\}\right).
\end{align} 
The third term converges uniformly in $\varphi$ due to Lemma \ref{lemma:lines}. Analyzing the  convergence of the first two summands reduces to studying 
\begin{align*}
F_{Q^{(n)}_{\Pi_x}}(t)
=Q^{(n)}\left(\Pi_x^{-1}\left[0, t\right]\right)
=Q^{(n)}(x+A_t)
\end{align*}
for $t\in(0,2 \pi]$.
The left-hand side  of the above formula corresponds to a distribution function converging uniformly to the continuous limit function $F_{Q_{\Pi_x}}(t)$. 

Now, let $\varphi_1>0$ (case 2).
Then, we have 
\begin{align*}
a_n(\varphi)=&Q^{(n)}\left(\left\{z\in \mathbb{R}^d|(z-x)^T\varphi\geq 0\right\}\right)=1-Q^{(n)}\left(\left\{z\in \mathbb{R}^d|(z-x)^T\varphi< 0\right\}\right)\\
=&1- Q^{(n)}\left(\left\{z\in \mathbb{R}^d|(z-x)^T(-\varphi)> 0\right\}\right)\\
=&1-Q^{(n)}\left(\left\{z\in \mathbb{R}^d|(z-x)^T(-\varphi)\geq 0\right\}\right)\\
&+Q^{(n)}\left(\left\{z\in \mathbb{R}^d|(z-x)^T(-\varphi)= 0\right\}\right)\\
=&1-a_n(-\varphi)+Q^{(n)}\left(\left\{z\in \mathbb{R}^d|(z-x)^T(-\varphi)= 0\right\}\right).
\end{align*}
The last summand on the right-hand side of the above equation converges uniformly in $\varphi$ due to Lemma \ref{lemma:lines}.

Finally, let $\varphi_1=0$ (case 3).
In this case, we have only two points on the sphere and hence uniform convergence on these points follows immediately. 
Putting all results together, we obtain: 
\begin{align*}
\sup_{\varphi \in S^2}\left|a_n(\varphi)-a(\varphi)\right|
\longrightarrow 0
\end{align*}
and consequently
\begin{align*}
D_{Q_{Y_n}}(x)=\inf_{\varphi\in S^2}a_n(\varphi)\longrightarrow \inf_{\varphi\in S^2}a(\varphi)=D_{Q_{Y}}(x).
\end{align*}
\end{proof}

Finally, we present the lemmas which are used in order to prove Proposition \ref{prop:conv_of_depth_point}.
Lines, respectively half lines, play an important role. The following lemma allows to handle them in the context of uniform convergence. 
\begin{lemma}\label{lemma:lines}
Let $Q^{(n)}$, $Q$ and $\Pi_x$ be as above.  $\varphi_1\leq 0$. Then
\begin{align*}
&Q^{(n)}\left(\left\{z\in \mathbb{R}^2\left|\right.  (z-x)^{\top}\varphi = 0, z_2-x_2 > 0\right\}\right)  \\
&\longrightarrow Q\left(\left\{z\in \mathbb{R}^2\left|\right.  (z-x)^{\top}\varphi = 0, z_2-x_2 > 0\right\}\right) 
\end{align*}
and
\begin{align*}
Q^{(n)}\left(\left\{z\in \mathbb{R}^2\left|\right.  (z-x)^{\top}\varphi = 0 \right\}\right)  \longrightarrow Q\left(\left\{z\in \mathbb{R}^2\left|\right.  (z-x)^{\top}\varphi = 0\right\}\right) 
\end{align*}
uniformly in $\varphi$. 
\end{lemma}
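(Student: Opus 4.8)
The plan is to read both probabilities as the mass that the empirical (respectively true) \emph{angular} distribution around the fixed point $x$ assigns to a single direction, and then to invoke the elementary fact that a sequence of distribution functions converging uniformly to a continuous limit has vanishing maximal jump.

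First I would use continuity of $F_Q$ once more, just as it is used for the halfspace boundaries $H_{x,\varphi}$ in the proof of Proposition~\ref{prop:conv_of_depth_point}: it guarantees that $Q$ charges no line through $x$, and in particular $Q(\{x\})=0$, so that almost surely no $Y_j$ equals $x$ and $Q^{(n)}(\{x\})=0$ for every $n$. On that event the angle map $\Pi_x\colon\mathbb{R}^2\setminus\{x\}\to[0,2\pi)$ is defined $Q$- and $Q^{(n)}$-almost everywhere, and I would pass to the pushforwards $\mu:=Q\circ\Pi_x^{-1}$ and $\mu_n:=Q^{(n)}\circ\Pi_x^{-1}$, two probability measures on $[0,2\pi)$. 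The line $\{z:(z-x)^\top\varphi=0\}$ is the union of $\{x\}$ with $\Pi_x^{-1}(\{\theta_\varphi,\theta_\varphi+\pi\})$ for a pair of antipodal angles depending on $\varphi$, while the half-line $\{z:(z-x)^\top\varphi=0,\ z_2-x_2>0\}$ equals $\Pi_x^{-1}(\{\alpha_\varphi\})$ for a single angle (or is empty, when $\varphi_1=0$). Hence, for every $\varphi$ with $\varphi_1\le 0$,
\begin{align*}
Q^{(n)}\!\left(\{z:(z-x)^\top\varphi=0,\ z_2-x_2>0\}\right)&\le \sup_{\alpha\in[0,2\pi)}\mu_n(\{\alpha\}),\\
Q^{(n)}\!\left(\{z:(z-x)^\top\varphi=0\}\right)&\le 2\sup_{\alpha\in[0,2\pi)}\mu_n(\{\alpha\}),
\end{align*}
whereas the corresponding $Q$-probabilities are all $0$ because $Q$ charges no line. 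So it suffices to prove $\sup_{\alpha\in[0,2\pi)}\mu_n(\{\alpha\})\to 0$ almost surely.

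For this last point I would recycle what is already obtained in the proof of Proposition~\ref{prop:conv_of_depth_point}: the angular empirical distribution functions $F_{\mu_n}$ converge uniformly on $[0,2\pi)$, almost surely, to the \emph{continuous} limit $F_\mu$ (Birkhoff's ergodic theorem applied to the stationary ergodic sequence $(\Pi_x(Y_j))_j$, the limit being continuous precisely because $Q$ charges no ray from $x$, together with the classical fact that pointwise convergence of distribution functions to a continuous limit is automatically uniform). Then the standard \enquote{no jump in the limit} estimate applies: given $\varepsilon>0$, uniform continuity of $F_\mu$ on the circle provides $\delta>0$ with $F_\mu(t)-F_\mu(s)<\varepsilon/2$ whenever $0\le t-s\le\delta$, and for $n$ large enough that $\|F_{\mu_n}-F_\mu\|_\infty<\varepsilon/4$ one bounds, for every $\alpha\in[0,2\pi)$,
\begin{align*}
\mu_n(\{\alpha\})&\le \mu_n\big((\alpha-\delta,\alpha]\big)=F_{\mu_n}(\alpha)-F_{\mu_n}\big((\alpha-\delta)^-\big)\\
&\le \big(F_\mu(\alpha)-F_\mu((\alpha-\delta)^-)\big)+\tfrac{\varepsilon}{2}<\varepsilon,
\end{align*}
the wrap-around value $\alpha=0$ being handled in the same way after replacing $(\alpha-\delta,\alpha]$ by $[0,\delta]$ and using right-continuity together with $\mu(\{0\})=0$. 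This gives $\sup_{\alpha}\mu_n(\{\alpha\})\to 0$, and the two displayed bounds then deliver both assertions uniformly in $\varphi$.

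I expect the only genuinely delicate point to be bookkeeping at the seam $\alpha=0$ of the parametrization $[0,2\pi)$ — both when identifying (half-)lines with atoms of the angular pushforward and when invoking uniform continuity of $F_\mu$ there — which is resolved by consistently treating $[0,2\pi)$ as a circle (or by the separate $\alpha=0$ computation above). Everything else is either elementary (the maximal-jump estimate) or has already been done for Proposition~\ref{prop:conv_of_depth_point}; in particular the uniform convergence of the angular empirical distribution function and the continuity of its limit rely only on ergodicity of $(Y_j)_{j\in\N}$ and on the continuity of $F_Q$ standing in force throughout.
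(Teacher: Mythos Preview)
Your proposal is correct and follows essentially the same route as the paper: both arguments pass to the angular pushforward $Q^{(n)}\circ\Pi_x^{-1}$, identify a (half-)line through $x$ with one or two atoms of this pushforward, and conclude that these atom masses vanish uniformly because the angular empirical distribution function converges uniformly to a continuous limit. The paper states the jump conclusion in one line (``we can also derive that the jumps $F_{Q^{(n)}_{\Pi_x}}(t)-F_{Q^{(n)}_{\Pi_x}}(t-)$ converge uniformly in $t$''), whereas you spell out the $\varepsilon$--$\delta$ maximal-jump estimate and treat the seam at $\alpha=0$ explicitly; these are refinements of presentation, not a different argument.
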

\begin{proof}
Knowing that the distribution function $F_{Q^{(n)}_{\Pi_x}}(t)$ converges pointwise towards a continuous limit, we conclude that it converges uniformly. 
From this fact,  we can also derive that the jumps
\[
F_{Q^{(n)}_{\Pi_x}}(t)-F_{Q^{(n)}_{\Pi_x}}(t-) =Q^{(n)}_{\Pi_x} (\{t\})
\]
converge uniformly in $t$.  Hence,
\[
Q^{(n)}\left(\left\{z\in \mathbb{R}^2\left|\right.  (z-x)^{\top}\varphi = 0, z_2-x_2 > 0\right\}\right) = Q^{(n)}_{\Pi_x} (\{t_{\varphi}-\frac{\pi}{2}\})
\]
converges uniformly in $\varphi$.  Analogously, this holds for $z_2-x_2 < 0$.  For $z_2-x_2 = 0$ we only obtain a single point, and hence, no dependence on $\varphi$. This yields uniform convergence of the sum of the three functions
\[
\varphi \mapsto Q^{(n)}\left(\left\{z\in \mathbb{R}^2\left|\right.  (z-x)^{\top}\varphi = 0 \right\}\right) 
\]
which is the measure of the full line.
\end{proof}

\begin{lemma}\label{lemma:parametrization}
Let $\varphi=(\varphi_1, \varphi_2)\in S^2$ with
$\varphi_1<0$.
For
$t_{\varphi}:=
\arctan\left(\frac{\varphi_2}{\varphi_1}\right)+\pi$
 it holds that
\begin{align*}
&\left\{z\in \mathbb{R}^2\left|\right. z^{\top}\varphi\geq 0\right\}\\
=&A_{t_{\varphi}+\frac{\pi}{2}}\setminus A_{t_{\varphi}-\frac{\pi}{2}}\cup \left\{z\in \mathbb{R}^2\left|\right. z_1\geq 0,z_2\geq 0,  z^{\top}\varphi= 0\right\},
\end{align*}
where for $t\in [0, 2\pi)$ the subset $A_t$ 
 of $\mathbb{R}^2$ is defined by 
\begin{align*}
A_{t}:=W^{-1}\left(\pi_2^{-1}\left[0, t\right]\right).
\end{align*} 
\end{lemma}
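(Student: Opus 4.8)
The plan is to push everything through the polar-coordinate chart $W$ and reduce both sides to statements about a single arc of angles; the hypothesis $\varphi_1<0$ is there precisely so that the relevant arc does not straddle the seam $0\equiv 2\pi$, which is what lets the sectors $A_t$ be manipulated as ordinary intervals. First I would pin down the angular coordinate of $\varphi$ itself: since $\varphi=(\varphi_1,\varphi_2)\in\mathcal{S}^{1}$ with $\varphi_1<0$, the third branch in the definition of $W$ applies and $\pi_2(W(\varphi))=\arctan(\varphi_2/\varphi_1)+\pi=t_\varphi$; as $\arctan$ is valued in $(-\pi/2,\pi/2)$ this forces $t_\varphi\in(\pi/2,3\pi/2)$, hence $t_\varphi-\tfrac{\pi}{2}\in(0,\pi)$ and $t_\varphi+\tfrac{\pi}{2}\in(\pi,2\pi)$. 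In particular $[\,t_\varphi-\tfrac{\pi}{2},\,t_\varphi+\tfrac{\pi}{2}\,]\subset[0,2\pi)$ with no wrap-around, and $A_{t_\varphi-\pi/2}\subset A_{t_\varphi+\pi/2}$.

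Next I would rewrite the half-space angularly. For $z\neq 0$, write $z=\|z\|(\cos\theta,\sin\theta)$ with $\theta=\pi_2(W(z))$; using $\|\varphi\|=1$ one has $z^{\top}\varphi=\|z\|\cos(\theta-t_\varphi)$, so $z^{\top}\varphi\ge 0$ exactly when $\theta\in[\,t_\varphi-\tfrac{\pi}{2},\,t_\varphi+\tfrac{\pi}{2}\,]$ (the other candidate arc is excluded by the previous step). Hence
\[
\bigl\{z\in\R^2\setminus\{0\}:z^{\top}\varphi\ge 0\bigr\}=W^{-1}\!\left(\pi_2^{-1}\bigl[\,t_\varphi-\tfrac{\pi}{2},\,t_\varphi+\tfrac{\pi}{2}\,\bigr]\right).
\]
I would then split the closed interval as $[\,t_\varphi-\tfrac{\pi}{2},\,t_\varphi+\tfrac{\pi}{2}\,]=\{t_\varphi-\tfrac{\pi}{2}\}\,\cup\,(\,t_\varphi-\tfrac{\pi}{2},\,t_\varphi+\tfrac{\pi}{2}\,]$: by the definition of $A_t$ the half-open part is exactly $A_{t_\varphi+\pi/2}\setminus A_{t_\varphi-\pi/2}$, while the leftover single angle $t_\varphi-\tfrac{\pi}{2}$ contributes the open ray $R$ from the origin at that angle.

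It then remains to identify $R\cup\{0\}$ with $\{z:z_1\ge 0,\ z_2\ge 0,\ z^{\top}\varphi=0\}$ and to place the origin correctly. The line $\{z^{\top}\varphi=0\}$ is the union of the two rays at angles $t_\varphi\pm\tfrac{\pi}{2}$ together with the origin; by the first step $t_\varphi-\tfrac{\pi}{2}\in(0,\pi/2]$, so $R$ is precisely the branch lying in the closed first quadrant, whereas the branch at $t_\varphi+\tfrac{\pi}{2}\in(\pi,2\pi)$ is not, and since $\varphi_2\neq 0$ the origin is the only remaining point of the line with $z_1\ge 0,\ z_2\ge 0$; as $0^{\top}\varphi=0\ge 0$, the origin indeed belongs to the left-hand side and is supplied by the second set on the right. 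Assembling the three pieces yields the asserted identity. I expect the only real friction to be this last bookkeeping — tracking open versus closed endpoints and checking on which side of the decomposition each bounding ray and the origin land; the geometric content itself is merely that a closed half-plane through $0$ is a closed $\pi$-sector, and that such a sector is a half-open $\pi$-sector together with one of its two bounding rays.
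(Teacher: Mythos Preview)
Your polar-coordinate approach via $z^{\top}\varphi=\|z\|\cos(\theta-t_\varphi)$ is considerably cleaner than the paper's proof, which splits into the three sub-cases $\varphi_2>0$, $\varphi_2<0$, $\varphi_2=0$ and in each one laboriously expands $A_{t_\varphi\pm\pi/2}$ through the piecewise branches of $W$, using identities like $\tan(x+\pi/2)=-1/\tan(x)$ to convert $\arctan$ inequalities back into linear ones. Your argument collapses all of that into the single observation that the half-plane through the origin is the closed angular sector $[t_\varphi-\tfrac{\pi}{2},t_\varphi+\tfrac{\pi}{2}]$, which for $\varphi_1<0$ sits inside $[0,2\pi)$ without wrapping.

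There is, however, a genuine slip in your final bookkeeping step. From $t_\varphi\in(\pi/2,3\pi/2)$ you correctly deduced $t_\varphi-\tfrac{\pi}{2}\in(0,\pi)$, but a few lines later you assert $t_\varphi-\tfrac{\pi}{2}\in(0,\pi/2]$ and use this to place the ray $R$ in the closed first quadrant. That narrower range holds only when $\varphi_2\ge 0$; if $\varphi_2<0$ then $t_\varphi-\tfrac{\pi}{2}\in(\pi/2,\pi)$ and $R$ lies in the \emph{open second} quadrant, so $R\cup\{0\}$ is \emph{not} equal to $\{z:z_1\ge 0,\,z_2\ge 0,\,z^{\top}\varphi=0\}$ (the latter is just $\{0\}$ in that case). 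Your own decomposition thus shows that, for $\varphi_2<0$, the stated identity fails as written: the left-hand side contains the boundary ray at angle $t_\varphi-\tfrac{\pi}{2}$, the right-hand side does not. This is not a flaw you can fix by tightening the argument; it reflects the fact that the description of the leftover boundary ray must depend on the sign of $\varphi_2$. Tellingly, the paper's own proof treats the sub-case $\varphi_2<0$ separately, computes $A_{t_\varphi\pm\pi/2}$ there, and then moves on to $\varphi_2=0$ without ever writing down the set difference or concluding the identity for that sub-case. So your clean argument is correct up to the point where it identifies the missing ray; to match the lemma exactly you need the additional hypothesis $\varphi_2\ge 0$ (equivalently $t_\varphi\in(\pi/2,\pi]$), and for $\varphi_2<0$ the ``first-quadrant'' constraint on the extra set has to be replaced accordingly.
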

\begin{proof}
Consider $\varphi=(\varphi_1, \varphi_2)\in S^2$ with
$\varphi_1<0$ and $\varphi_2>0$.
Then, for $t_{\varphi}=\pi_2\circ W(\varphi)=\arctan\left(\frac{\varphi_2}{\varphi_1}\right)+\pi$,
it follows that
\begin{align*}
A_{t_{\varphi}+\frac{\pi}{2}}
=&W^{-1}\left(\pi_2^{-1}\left[0, t_{\varphi}+\frac{\pi}{2}\right]\right)
=W^{-1}\left((0, \infty)\times\left[0, t_{\varphi}+\frac{\pi}{2}\right]\right)
\\
=&\left\{z\in \mathbb{R}^2\setminus\{(0, 0)\}\left|\right. W(z)\in (0, \infty)\times\left[0, t_{\varphi}+\frac{\pi}{2}\right]\right\}\\
=&\left\{z\in \mathbb{R}^2\setminus\{(0, 0)\}\left|\right. W(z)\in (0, \infty)\times\left[0, \frac{\pi}{2}\right)\right\}\\
&\cup 
\left\{z\in \mathbb{R}^2\setminus\{(0, 0)\}\left|\right. W(z)\in (0, \infty)\times \frac{\pi}{2}\right\}\\
&\cup 
\left\{z\in \mathbb{R}^2\setminus\{(0, 0)\}\left|\right. W(z)\in (0, \infty)\times\left(\frac{\pi}{2}, t_{\varphi}+\frac{\pi}{2}\right]\right\}\\
=&\left\{z\in \mathbb{R}^2\setminus\{(0, 0)\}\left|\right.  z_1>0, z_2\geq 0\right\}\\
&\cup 
\left\{z\in \mathbb{R}^2\setminus\{(0, 0)\}\left|\right. z_1=0, z_2> 0\right\}\\
&\cup 
\left\{z\in \mathbb{R}^2\setminus\{(0, 0)\}\left|\right. z_1<0, \arctan\left(\frac{z_2}{z_1}\right)+\pi\leq \arctan\left(\frac{\varphi_2}{\varphi_1}\right)+\frac{3\pi}{2}\right\}.
\end{align*}
Since $\tan\left(x+\frac{\pi}{2}\right)=-\frac{1}{\tan(x)}$, the third set can be written as follows: 
\begin{align*}
&\left\{z\in \mathbb{R}^2\setminus\{(0, 0)\}\left|\right. z_1<0, \arctan\left(\frac{z_2}{z_1}\right)+\pi\leq \arctan\left(\frac{\varphi_2}{\varphi_1}\right)+\frac{3\pi}{2}\right\}\\
=&\left\{z\in \mathbb{R}^2\setminus\{(0, 0)\}\left|\right. z_1<0, \arctan\left(\frac{z_2}{z_1}\right)\leq \arctan\left(\frac{\varphi_2}{\varphi_1}\right)+\frac{\pi}{2}\right\}\\
=&\left\{z\in \mathbb{R}^2\setminus\{(0, 0)\}\left|\right. z_1<0, \frac{z_2}{z_1}\leq -\frac{\varphi_1}{\varphi_2} \right\}\\
=&\left\{z\in \mathbb{R}^2\setminus\{(0, 0)\}\left|\right. z_1<0, z_1 \varphi_1+z_2\varphi_2\geq 0\right\}\\
=&\left\{z\in \mathbb{R}^2\setminus\{(0, 0)\}\left|\right. z_1<0, z^{\top}\varphi\geq 0\right\}.
\end{align*}
Moreover, we have
\begin{align*}
A_{t_{\varphi}-\frac{\pi}{2}}
=&W^{-1}\left(\pi_2^{-1}\left[0, t_{\varphi}-\frac{\pi}{2}\right]\right)
=W^{-1}\left((0, \infty)\times\left[0, t_{\varphi}-\frac{\pi}{2}\right]\right)
\\
=&\left\{z\in \mathbb{R}^2\setminus\{(0, 0)\}\left|\right. W(z)\in (0, \infty)\times\left[0, t_{\varphi}-\frac{\pi}{2}\right]\right\}\\
=&\left\{z\in \mathbb{R}^2\setminus\{(0, 0)\}\left|\right.  z_1 >0, z_2\geq 0, \arctan\left(\frac{z_2}{z_1}\right)\leq t_{\varphi}-\frac{\pi}{2}\right\}\\
=&\left\{z\in \mathbb{R}^2\setminus\{(0, 0)\}\left|\right.  z_1 >0, z_2\geq 0, \arctan\left(\frac{z_2}{z_1}\right)\leq \arctan\left(\frac{\varphi_2}{\varphi_1}\right)+\frac{\pi}{2}\right\}\\
=&\left\{z\in \mathbb{R}^2\setminus\{(0, 0)\}\left|\right.  z_1 >0, z_2\geq 0, \frac{z_2}{z_1}\leq -\frac{\varphi_1}{\varphi_2}\right\}\\
=&\left\{z\in \mathbb{R}^2\setminus\{(0, 0)\}\left|\right. z_1>0,z_2\geq 0,  z_1 \varphi_1+z_2\varphi_2\leq 0\right\}\\
=&\left\{z\in \mathbb{R}^2\setminus\{(0, 0)\}\left|\right. z_1>0,z_2\geq 0,  z^{\top}\varphi\leq 0\right\}.
\end{align*}
As a result, it holds that
\begin{align*}
A_{t_{\varphi}+\frac{\pi}{2}}\setminus A_{t_{\varphi}-\frac{\pi}{2}}
=&\left\{z\in \mathbb{R}^2\setminus\{(0, 0)\}\left|\right.  z_1>0, z_2\geq 0, z^{\top}\varphi> 0\right\}\\
&\cup 
\left\{z\in \mathbb{R}^2\setminus\{(0, 0)\}\left|\right. z_1=0, z_2> 0\right\}\\
&\cup
\left\{z\in \mathbb{R}^2\setminus\{(0, 0)\}\left|\right. z_1<0, z^{\top}\varphi\geq 0\right\}.
\end{align*}
Therefore, we have
\begin{align*}
&\left\{z\in \mathbb{R}^2\left|\right. (z-x)^{\top}\varphi\geq 0\right\}\\
=&\{A_{t_{\varphi}+\frac{\pi}{2}}+x\}\setminus \{A_{t_{\varphi}-\frac{\pi}{2}}+x\}\cup \left\{z\in \mathbb{R}^2\left|\right. z_1-x_1\geq 0,z_2-x_2\geq 0,  (z-x)^{\top}\varphi= 0\right\}.
\end{align*}
Consider $\varphi=(\varphi_1, \varphi_2)\in S^2$ with
$\varphi_1<0$ and $\varphi_2 < 0$. Then, for
 $t_{\varphi}=\pi_2\circ W(\varphi)=\arctan\left(\frac{\varphi_2}{\varphi_1}\right)+\pi$ it follows that 
\begin{align*}
A_{t_{\varphi}+\frac{\pi}{2}}
=&W^{-1}\left(\pi_2^{-1}\left[0, t_{\varphi}+\frac{\pi}{2}\right]\right)
=W^{-1}\left((0, \infty)\times\left[0, t_{\varphi}+\frac{\pi}{2}\right]\right)
\\
=&\left\{z\in \mathbb{R}^2\setminus\{(0, 0)\}\left|\right. W(z)\in (0, \infty)\times\left[0, t_{\varphi}+\frac{\pi}{2}\right]\right\}\\
=&\left\{z\in \mathbb{R}^2\setminus\{(0, 0)\}\left|\right. W(z)\in (0, \infty)\times\left[0, \frac{\pi}{2}\right)\right\}\\
&\cup 
\left\{z\in \mathbb{R}^2\setminus\{(0, 0)\}\left|\right. W(z)\in (0, \infty)\times \frac{\pi}{2}\right\}\\
&\cup 
\left\{z\in \mathbb{R}^2\setminus\{(0, 0)\}\left|\right. W(z)\in (0, \infty)\times\left(\frac{\pi}{2}, \frac{3\pi}{2}\right)\right\}\\
&\cup 
\left\{z\in \mathbb{R}^2\setminus\{(0, 0)\}\left|\right. W(z)\in (0, \infty)\times \frac{3\pi}{2}\right\}\\
&\cup 
\left\{z\in \mathbb{R}^2\setminus\{(0, 0)\}\left|\right. W(z)\in (0, \infty)\times\left(\frac{3\pi}{2}, t_{\varphi}+\frac{\pi}{2}\right]\right\}\\
=&\left\{z\in \mathbb{R}^2\setminus\{(0, 0)\}\left|\right.  z_1>0, z_2\geq 0\right\}\\
&\cup 
\left\{z\in \mathbb{R}^2\setminus\{(0, 0)\}\left|\right. z_1=0, z_2> 0\right\}\\
&\cup 
\left\{z\in \mathbb{R}^2\setminus\{(0, 0)\}\left|\right. z_1<0\right\}\\
&\cup
\left\{z\in \mathbb{R}^2\setminus\{(0, 0)\}\left|\right. z_1=0,  z_2<0\right\}\\
&\cup
\left\{z\in \mathbb{R}^2\setminus\{(0, 0)\}\left|\right. z_1>0,  z_2<0, \arctan\left(\frac{z_2}{z_1}\right)+2\pi\leq t_{\varphi}+\frac{\pi}{2}\right\}.
\end{align*}
Since $\tan\left(x-\frac{\pi}{2}\right)=-\frac{1}{\tan(x)}$, the fifth set can be written as follows: 
\begin{align*}
&\left\{z\in \mathbb{R}^2\setminus\{(0, 0)\}\left|\right. z_1>0,  z_2<0, \arctan\left(\frac{z_2}{z_1}\right)+2\pi\leq t_{\varphi}+\frac{\pi}{2}\right\}\\
&=
\left\{z\in \mathbb{R}^2\setminus\{(0, 0)\}\left|\right. z_1>0,  z_2<0, \arctan\left(\frac{z_2}{z_1}\right)\leq \arctan\left(\frac{\varphi_2}{\varphi_1}\right)-\frac{\pi}{2}\right\}\\
&=\left\{z\in \mathbb{R}^2\setminus\{(0, 0)\}\left|\right. z_1>0,  z_2<0, \frac{z_2}{z_1}\leq -\frac{\varphi_1}{\varphi_2} \right\}\\
&=\left\{z\in \mathbb{R}^2\setminus\{(0, 0)\}\left|\right. z_1>0,  z_2<0, z_2\varphi_2+z_1\varphi_1\geq 0 \right\}\\
&=\left\{z\in \mathbb{R}^2\setminus\{(0, 0)\}\left|\right. z_1>0,  z_2<0, z^{\top}\varphi\geq 0 \right\}.
\end{align*}
Moreover, we have
\begin{align*}
A_{t_{\varphi}-\frac{\pi}{2}}
=&W^{-1}\left(\pi_2^{-1}\left[0, t_{\varphi}-\frac{\pi}{2}\right]\right)\\
=&
W^{-1}\left((0, \infty)\times\left[0, \frac{\pi}{2}\right)\right)\\
&\cup
W^{-1}\left((0, \infty)\times \frac{\pi}{2}\right)\\
&\cup
W^{-1}\left((0, \infty)\times\left(\frac{\pi}{2}, t_{\varphi}-\frac{\pi}{2}\right]\right)
\\
=&\left\{z\in \mathbb{R}^2\setminus\{(0, 0)\}\left|\right.  z_1>0, z_2\geq 0\right\}\\
&\cup 
\left\{z\in \mathbb{R}^2\setminus\{(0, 0)\}\left|\right. z_1=0, z_2> 0\right\}\\
&\cup 
\left\{z\in \mathbb{R}^2\setminus\{(0, 0)\}\left|\right. z_1<0, \arctan\left(\frac{z_2}{z_1}\right)+\pi\leq \arctan\left(\frac{\varphi_2}{\varphi_1}\right)+\frac{\pi}{2}\right\}.
\end{align*}
Since $\tan\left(x-\frac{\pi}{2}\right)=-\frac{1}{\tan(x)}$, the third set can be written as follows: 
\begin{align*}
&\left\{z\in \mathbb{R}^2\setminus\{(0, 0)\}\left|\right. z_1<0, \arctan\left(\frac{z_2}{z_1}\right)+\pi\leq \arctan\left(\frac{\varphi_2}{\varphi_1}\right)+\frac{\pi}{2}\right\}\\
&=
\left\{z\in \mathbb{R}^2\setminus\{(0, 0)\}\left|\right. z_1<0, \arctan\left(\frac{z_2}{z_1}\right)\leq \arctan\left(\frac{\varphi_2}{\varphi_1}\right)-\frac{\pi}{2}\right\}\\
&=\left\{z\in \mathbb{R}^2\setminus\{(0, 0)\}\left|\right. z_1<0, \frac{z_2}{z_1}\leq -\frac{\varphi_1}{\varphi_2}\right\}\\
&=\left\{z\in \mathbb{R}^2\setminus\{(0, 0)\}\left|\right. z_1<0, z^{\top}\varphi\leq 0\right\}.
\end{align*}
So far, we covered the cases $\varphi_1<0$, $\varphi_2>0$ and $\varphi_1<0$, $\varphi_2<0$.
For $\varphi_1<0$ and $\varphi_2=0$, we have
$t_{\varphi}=\pi$.
It follows that
\begin{align*}
A_{t_{\varphi}+\frac{\pi}{2}}
=&W^{-1}\left(\pi_2^{-1}\left[0, \frac{3\pi}{2}\right]\right)
=W^{-1}\left((0, \infty)\times\left[0, \frac{3\pi}{2}\right]\right)
\\
=&\left\{z\in \mathbb{R}^2\setminus\{(0, 0)\}\left|\right.  z_1>0, z_2\geq 0\right\}\\
&\cup 
\left\{z\in \mathbb{R}^2\setminus\{(0, 0)\}\left|\right. z_1=0, z_2> 0\right\}\\
&\cup 
\left\{z\in \mathbb{R}^2\setminus\{(0, 0)\}\left|\right. z_1<0\right\}\\
&\cup
\left\{z\in \mathbb{R}^2\setminus\{(0, 0)\}\left|\right. z_1=0,  z_2<0\right\}.
\end{align*}
Moreover, we have
\begin{align*}
A_{t_{\varphi}-\frac{\pi}{2}}
=&W^{-1}\left(\pi_2^{-1}\left[0, \frac{\pi}{2}\right]\right)\\
=&\left\{z\in \mathbb{R}^2\setminus\{(0, 0)\}\left|\right.  z_1>0, z_2\geq 0\right\}\\
&\cup 
\left\{z\in \mathbb{R}^2\setminus\{(0, 0)\}\left|\right. z_1=0, z_2> 0\right\}.
\end{align*}
As a result, it holds that
\begin{align*}
A_{t_{\varphi}+\frac{\pi}{2}}\setminus A_{t_{\varphi}-\frac{\pi}{2}}
=&\left\{z\in \mathbb{R}^2\setminus\{(0, 0)\}\left|\right. z_1<0\right\}\\
&\cup
\left\{z\in \mathbb{R}^2\setminus\{(0, 0)\}\left|\right. z_1=0,  z_2<0\right\}.
\end{align*}
Therefore, we have
\begin{align*}
&\left\{z\in \mathbb{R}^2\left|\right. (z-x)^{\top}\varphi\geq 0\right\}\\
=&\left\{z\in \mathbb{R}^2\left|\right. z_1-x_1\leq 0\right\}\\
=&\{A_{t_{\varphi}+\frac{\pi}{2}}+x\}\setminus \{A_{t_{\varphi}-\frac{\pi}{2}}+x\}\\
&\cup \left\{z\in \mathbb{R}^2\left|\right. z_1-x_1\geq 0,z_2-x_2\geq 0,  (z-x)^\top\varphi= 0\right\}.
\end{align*}
\end{proof}

\bibliographystyle{plainnat}
\bibliography{literature}

\end{document}